\newdefinition{definition}{Definition}
\newtheorem{proposition}{Proposition}
\newtheorem{theorem}{Theorem}
\newtheorem{lemma}{Lemma}
\newtheorem{corollary}{Corollary}
\newproof{proof}{Proof}
\begin{document}

\title{Remoteness of permutation codes}
\author[maths]{Peter J. Cameron}
\ead{p.j.cameron@qmul.ac.uk}

\author[ecs]{Maximilien Gadouleau\fnref{fn1}}
\ead{m.r.gadouleau@durham.ac.uk}

\address[maths]{School of Mathematical Sciences, Queen Mary, University of London, Mile End Road, London, E1 4NS}
\address[ecs]{School of Engineering and Computing Sciences, Durham University, South Road, Durham, DH1 3LE}

\fntext[fn1]{Supported by EPSRC ref:EP/H016015/1.}


\begin{abstract}
In this paper, we introduce a new parameter of a code, referred to as the remoteness, which can be viewed as a dual to the covering radius. Indeed, the remoteness is the minimum radius needed for a single ball to cover all codewords. After giving some general results about the remoteness, we then focus on the remoteness of permutation codes. We first derive upper and lower bounds on the minimum cardinality of a code with a given remoteness. We then study the remoteness of permutation groups. We show that the remoteness of transitive groups can only take two values, and we determine the remoteness of transitive groups of odd order. We finally show that the problem of determining the remoteness of a given transitive group is equivalent to determining the stability number of a related graph.
\end{abstract}
\maketitle

\section{Introduction} \label{sec:intro}

Let $X$ be a finite set of points and $d$ be a metric on $X$ which takes integral values. For any $v \in X$ and $t \ge 0$, we refer to the set $B_t(v) = \{u \in X : d(u,v) \le t\}$ as the ball of radius $t$ centered at $v$. We denote the minimum volume of a ball with radius $t$ in $X$ as $V^{\min}_t$ and the corresponding maximum as $V^{\max}_t$.

Let $C \subseteq X$, $C \ne \emptyset$ be a {\em code}, i.e. a set of points, which we will refer to as {\em codewords}. The maximum distance between any two codewords in $C$ is the {\em diameter} of $C$:
\begin{equation} \label{eq:delta} \nonumber
    \delta(C) = \max_{c,c' \in C} d(c,c'),
\end{equation}
while the minimum radius of a ball centered at a codeword needed to cover $C$ is the {\em radius} of $C$:
\begin{equation} \label{eq:rho} \nonumber
    \rho(C) = \min_{c \in C} \max_{c' \in C} d(c,c').
\end{equation}
It is well-known that the diameter and the radius are related by \cite[Ch. 6, Problem 10]{MS77}
\begin{equation}\label{eq:rho_delta} \nonumber
    \rho(C) \le \delta(C) \le 2\rho(C).
\end{equation}

Another important parameter of a code $C$ is its {\em covering radius}, defined as the minimum radius such that the balls centered around the codewords cover the whole set $X$:
\begin{equation} \label{eq:cr} \nonumber
    \mathrm{cr}(C) = \max_{v \in X} \min_{c \in C} d(v,c).
\end{equation}
For a thorough exposition of the covering radius, see \cite{CHLL97}.

\section{Results for all metric spaces} \label{sec:all_graphs}

\begin{definition}
For any code $C \subseteq X$, the {\em remoteness} of $C$ is defined as the minimum radius of a ball that covers the whole code:
\begin{equation} \label{eq:r(S)} \nonumber
    r(C) = \min_{v \in X} \max_{c \in C} d(v,c).
\end{equation}
\end{definition}

For any code $C$, we have
\begin{equation} \label{eq:trivial_r} \nonumber
    \frac{\delta(C)}{2} \le r(C) \le \rho(C).
\end{equation}
Furthermore, $r(\{v\}) = 0$ for all $v \in X$ and $r(X) = \rho(X)$.

Clearly, the maximum cardinality of a code with remoteness at most $t$ is given by $V^{\max}_t$. We are hence interested in the minimum cardinality of a code with remoteness at least $t$ for $t = 0,\ldots,\rho(X)$, which we denote as $m(X,d,t)$ henceforth, or simply
$$
    m(X,t) = \min_{r(C) = t} |C|.
$$
We have $m(X,0) = 1$, $m(X,t) = 2$ for $1 \le t \le \left\lceil \frac{\delta(X)}{2} \right\rceil$, and in general $m(X,t)$ is a non-decreasing function of $t$. The consideration above also shows that $m(X,t) \le V^{\max}_{t-1} + 1$; however this bound is usually very poor.

We now give a lower bound on the remoteness. Recall that an $(n,r,k)$-covering design is a family of $r$-subsets (called blocks) of a set of size $n$, where each $k$-set is contained in at least one block \cite{MM92}. We denote the minimum cardinality of an $(n,r,k)$-covering design as $K(n,r,k)$. A table of the tightest bounds on $K(n,r,k)$ known so far is available at \cite{Mar}. Denote the maximum remoteness of a code with cardinality $k$ as $r(k) = \max \{r(C) : |C| = k\}$; thus $r(k) = \max \{t: m(X,t) \le k\}$.

\begin{proposition} \label{prop:covering_design}
For all $v \in X$, let $B'_{r(k)}(v)$ be a set of $V^{\max}_{r(k)}$ points containing $B_{r(k)}(v)$. Then the family $\{B'_{r(k)}(v) : v \in X\}$ forms an $(|X|,V^{\max}_{r(k)},k)$-covering design and
\begin{equation} \label{eq:r_K} \nonumber
    r(k) \ge \min \{t : K(|X|,V^{\max}_t,k) \le |X|\}.
\end{equation}
\end{proposition}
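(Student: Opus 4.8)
The plan is to prove the two assertions in sequence: first that the indicated family is a covering design, and then to read off the stated inequality as an immediate consequence, using the monotonicity of $V^{\max}_t$ and of $K(n,r,k)$ in their size parameter. The whole argument is essentially a translation of the definition of $r(k)$ into the language of covering designs.

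For the covering design claim, I would fix an arbitrary $k$-subset $C \subseteq X$ and exhibit a block containing it. Since $|C| = k$, the definition of $r(k)$ as the maximum remoteness over all codes of cardinality $k$ gives $r(C) \le r(k)$. Unfolding the definition $r(C) = \min_{v \in X} \max_{c \in C} d(v,c)$, this means there is a center $v \in X$ with $\max_{c \in C} d(v,c) \le r(k)$; equivalently $C \subseteq B_{r(k)}(v) \subseteq B'_{r(k)}(v)$. Thus every $k$-set is contained in one of the blocks. Each block $B'_{r(k)}(v)$ has exactly $V^{\max}_{r(k)}$ points by construction, and such a padding is always possible because $|B_{r(k)}(v)| \le V^{\max}_{r(k)}$ by definition of the maximum ball volume, so the family is an $(|X|, V^{\max}_{r(k)}, k)$-covering design.

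For the inequality, I would observe that the family is indexed by the points of $X$, hence consists of at most $|X|$ blocks; since $K(|X|, V^{\max}_{r(k)}, k)$ is by definition the minimum size of such a design, this yields $K(|X|, V^{\max}_{r(k)}, k) \le |X|$. Consequently the value $t = r(k)$ belongs to the set $\{t : K(|X|, V^{\max}_t, k) \le |X|\}$, and taking the minimum of that set gives $r(k) \ge \min\{t : K(|X|, V^{\max}_t, k) \le |X|\}$, as required.

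Since the proof is definitional, there is no deep obstacle; the one point requiring a little care is to confirm that the candidate set over which the minimum is taken behaves well. This follows from the observation that $V^{\max}_t$ is non-decreasing in $t$ (larger balls contain smaller ones) while $K(n,r,k)$ is non-increasing in its middle argument $r$ (larger blocks cover more easily), so $K(|X|, V^{\max}_t, k)$ is non-increasing in $t$. Hence the candidate set is an up-set and, since it contains $r(k)$, its minimum is well-defined and bounded above by $r(k)$.
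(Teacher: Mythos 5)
Your proof is correct and follows essentially the same route as the paper's: use the definition of $r(k)$ to place every $k$-set inside some ball $B_{r(k)}(v)$, pad to blocks of size $V^{\max}_{r(k)}$, conclude $K(|X|,V^{\max}_{r(k)},k)\le|X|$, and read off the inequality. Your added remarks on the monotonicity of $V^{\max}_t$ and $K$ are a harmless elaboration of the final step that the paper leaves implicit.
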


\begin{proof}
By definition, for any code $C$ of $k$ codewords, there exists a point $v$ such that $C \subseteq B_{r(k)}(v) \subseteq B'_{r(k)}(v)$. Therefore, the collection $\{B'_{r(k)}(v)\}$ forms a covering design and $K(|X|,V^{\max}_{r(k)},k) \le |X|$.
\end{proof}

For any code $C$, we denote the number of points at distance no more than $t$ from all codewords as $\mu(t,C)$. Remark that $\mu(t,C) >0$ if and only if $t \le r(C)$. Then we have $|\mu(t-1,C)| + |C| \ge m(X,t)$ for $t \le \rho(X)$. This holds because for each element in $\mu(t-1,C)$ we could choose a point at distance at least $t$ from it. Adding these points to $C$ yields a set with remoteness at least $t$ and cardinality at most $|\mu(t-1,C)| + |C|$. Thus, $m(X,t)$ can be viewed as a lower bound on the intersection of balls.

In general, the problem of remoteness can be viewed as a special case of strong domination in graphs \cite{HHS98}. Recall that a strong dominating set (also referred to as total dominating set) in a graph is a set of vertices $C \subseteq V$ such that any vertex of the graph is adjacent to some element of $C$. The following proposition is easily seen.

\begin{proposition} \label{prop:strong_dominating}
For $0 \le t \le \rho(X)$, let $E_t = \{uv : u,v \in X, d(u,v) \ge t\}$ and define the graph $G_t = (X,E_t)$. Then $r(C) \ge t$ if and only if $C$ is a strong dominating set of $G_t$.
\end{proposition}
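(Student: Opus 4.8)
The plan is to prove the two implications of the equivalence by directly unwinding the definition of remoteness and comparing it, quantifier by quantifier, with the definition of a strong (total) dominating set. The statement is essentially a translation between two ways of writing the same nested $\min$--$\max$ condition, so I expect no substantial difficulty beyond one bookkeeping point about distinct vertices.

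First I would rewrite the remoteness condition. By definition $r(C) = \min_{v \in X} \max_{c \in C} d(v,c)$, so $r(C) \ge t$ holds if and only if $\max_{c \in C} d(v,c) \ge t$ for every $v \in X$, which in turn is equivalent to the statement that for every $v \in X$ there exists $c \in C$ with $d(v,c) \ge t$. Next I would rewrite the domination condition: by the definitions given, $C$ is a strong dominating set of $G_t$ precisely when every vertex $v \in X$ is adjacent in $G_t$ to some $c \in C$, i.e.\ when for every $v \in X$ there exists $c \in C$ with $vc \in E_t$, which by the definition of $E_t$ means $d(v,c) \ge t$. Comparing the two conditions, they differ only in that an edge $vc \in E_t$ requires $c \ne v$. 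This is the one point needing care: whenever $t \ge 1$ we have $d(v,v) = 0 < t$, so any codeword $c$ witnessing $d(v,c) \ge t$ is automatically distinct from $v$, and the two conditions coincide. Both implications then follow at once by reading the resulting chain of equivalences in either direction.

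The main (minor) obstacle is therefore the boundary case $t = 0$, where the distinctness requirement genuinely bites: then $r(C) \ge 0$ holds for every nonempty $C$, whereas the graph $G_0$ has an edge between every pair of distinct points, so a singleton is not a total dominating set. I would dispose of this by observing that $t = 0$ is degenerate---every code trivially has remoteness at least $0$---and recording the equivalence for $1 \le t \le \rho(X)$, which is the range of interest; alternatively one adopts the convention that each vertex of $G_t$ carries a loop when $t = 0$, making the correspondence exact across the whole range.
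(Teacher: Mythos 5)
Your proof is correct and is exactly the definitional unwinding the paper intends: the paper offers no written proof, merely remarking that the proposition "is easily seen." Your care over the $t=0$ boundary is sound, and note that the paper's definition of $E_t$ does not actually exclude $u=v$, so $G_0$ carries loops and your second suggested convention is already in force.
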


Since $m(X,t)$ is the solution of a special set cover problem \cite{GJ79}, we can apply the bounds derived for the general case. We obtain \cite{CHLL97}
\begin{equation} \label{eq:bounds_m} \nonumber
    \frac{n}{n - V^{\min}_{t-1} + 1} \le m(X,t) \le \frac{n}{n - V^{\min}_{t-1} + 1} + \frac{n}{n - V^{\max}_{t-1} + 1}\ln (n - V^{\min}_{t-1} + 1).
\end{equation}

The lower bound is usually very poor, as we need $V^{\min}_{t-1} > \frac{n}{2} + 1$ to make it non-trivial. A code with a cardinality no more than the upper bound can be obtained by using a greedy algorithm \cite{Sla97}. The upper bound can be further refined by the techniques in \cite{CD97}.

The remoteness is closely related to the covering radius, as seen in Proposition~\ref{prop:r_cr} below.

\begin{proposition} \label{prop:r_cr}
For any code $C$,
\begin{equation} \label{eq:r_cr} \nonumber
    \rho(X) \le r(C) + \mathrm{cr}(C) \le \rho(X)+ \delta(X).
\end{equation}
\end{proposition}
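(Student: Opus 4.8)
The plan is to prove the two inequalities separately, as they have quite different flavours. For the lower bound $\rho(X) \le r(C) + \mathrm{cr}(C)$, I would exhibit a single centre whose maximum distance to all of $X$ is at most $r(C) + \mathrm{cr}(C)$; since $\rho(X)$ is by definition the minimum over all centres of this maximum distance, the bound follows immediately. Concretely, let $v^*$ be a point realising the remoteness, so that $d(v^*, c) \le r(C)$ for every $c \in C$. Now take an arbitrary $w \in X$. By the definition of the covering radius there is a codeword $c \in C$ with $d(w, c) \le \mathrm{cr}(C)$, and the triangle inequality gives $d(v^*, w) \le d(v^*, c) + d(c, w) \le r(C) + \mathrm{cr}(C)$. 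As $w$ was arbitrary, $\max_{w \in X} d(v^*, w) \le r(C) + \mathrm{cr}(C)$, and hence $\rho(X) \le r(C) + \mathrm{cr}(C)$.

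For the upper bound $r(C) + \mathrm{cr}(C) \le \rho(X) + \delta(X)$, I would bound the two terms on the left separately. First, $r(C) \le \rho(X)$: since $C \subseteq X$, for every centre $v$ we have $\max_{c \in C} d(v, c) \le \max_{w \in X} d(v, w)$, and minimising both sides over $v$ yields the claim. Second, $\mathrm{cr}(C) \le \delta(X)$: for each $w \in X$, choosing any fixed codeword $c_0 \in C$ gives $\min_{c \in C} d(w, c) \le d(w, c_0) \le \delta(X)$, so the maximum over $w$ is also at most $\delta(X)$. Adding the two inequalities gives the result.

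The two bounds are not symmetric in difficulty. The lower bound is the substantive one, and its only real content is choosing the bridging codeword correctly: one must use a codeword that is simultaneously close to the remoteness centre (within $r(C)$, which holds for \emph{all} codewords) and close to the given point $w$ (within $\mathrm{cr}(C)$, which holds for \emph{at least one} codeword). The upper bound is essentially a monotonicity observation arising from $C \subseteq X$. The main thing to watch throughout is the bookkeeping of the nested $\min$ and $\max$ quantifiers, namely verifying that the extremal point selected at each step genuinely enjoys the stated property for all the codewords, or all the points, that the argument requires.
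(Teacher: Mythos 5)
Your proof is correct. For the lower bound, you and the paper both reduce to a single application of the triangle inequality with a bridging codeword supplied by the covering radius, but you run the argument in opposite directions. You upper-bound $\rho(X)$ directly: take the remoteness centre $v^*$, and for each $w \in X$ route through a codeword $c$ with $d(w,c) \le \mathrm{cr}(C)$, using that \emph{every} codeword lies within $r(C)$ of $v^*$. The paper instead lower-bounds $r(C)$: for an arbitrary $v$ it picks a point $u$ with $d(u,v) \ge \rho(X)$ (which exists because $\rho(X)$ is a minimum of maxima), takes a codeword $c$ within $\mathrm{cr}(C)$ of $u$, and concludes $d(v,c) \ge \rho(X) - \mathrm{cr}(C)$. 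These are dual rearrangements of the same inequality; yours is arguably the more direct, since it needs no auxiliary far point, only the two optimal centres and the defining property of $\mathrm{cr}(C)$. For the upper bound the paper simply declares it trivial; your decomposition into $r(C) \le \rho(X)$ (monotonicity from $C \subseteq X$) and $\mathrm{cr}(C) \le \delta(X)$ (any fixed codeword witnesses the minimum) is exactly the intended justification and is correct.
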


\begin{proof}
The upper bound is trivial, we now prove the lower bound. It suffices to show that for any point $v \in X$, there exists a codeword in $C$ at distance at least $\rho(X) - \mathrm{cr}(C)$ from $v$. For any $v$, there exists $u$ such that $d(u,v) \ge \rho(X)$, and by definition of the covering radius there exists $c \in C$ such that $d(u,c) \le \mathrm{cr}(C)$. Hence the triangle inequality implies $d(v,c) \ge \rho(X) - \mathrm{cr}(C)$.
\end{proof}

Note that the bounds in Proposition~\ref{prop:r_cr} can be tight. For instance, if $C = X$, then $r(C) + \mathrm{cr}(C) = \rho(X)$. On the other hand, a pair of leaves in the star graph with at least $4$ vertices satisfies $r(C) = 1 = \rho(X)$, while $\mathrm{cr}(C) = 2 = \delta(X)$.

Denoting the minimum cardinality of a code with covering radius $t$ as $M_{\mathrm{cr}}(X,t)$, Proposition~\ref{prop:r_cr} implies
$m(X,t) \le M_{\mathrm{cr}}(X,\rho(X) - t)$.

Furthermore, we say that the metric space $(X,d)$ is {\em balanced} if $\rho(X) = \delta(X)$ and if for any $v \in X$, there exists $\bar{v} \in X$ at distance $\rho(X)$ such that
$$
    d(u,v) + d(u,\bar{v}) = \rho(X)
$$
for all $u \in X$. For instance the binary Hamming graph $H(n,2)$ (the $n$-dimensional hypercube) with the shortest path distance is a balanced metric space, where $\bar{v} = v + 1^n$, the all-ones vector.

\begin{corollary} \label{cor:balanced}
If $X$ is balanced, then $r(C) + \mathrm{cr}(C) = \rho(X)$ for any code $C \subseteq X$. Therefore, $m(X,t) = M_{\mathrm{cr}}(X,\rho(X)-t)$.
\end{corollary}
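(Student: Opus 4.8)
The lower bound $r(C) + \mathrm{cr}(C) \ge \rho(X)$ is already supplied by Proposition~\ref{prop:r_cr} and uses only the triangle inequality, so the entire content of the corollary rests on establishing the reverse inequality $r(C) + \mathrm{cr}(C) \le \rho(X)$ under the balanced hypothesis. Equivalently, I must exhibit a single point $v$ whose farthest codeword lies at distance at most $\rho(X) - \mathrm{cr}(C)$, since then $r(C) = \min_{v} \max_{c \in C} d(v,c) \le \rho(X) - \mathrm{cr}(C)$.

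The plan is to locate that point as the balanced partner of the place where the covering radius is attained. First I would fix a point $w \in X$ realizing the outer maximum in $\mathrm{cr}(C) = \max_{v} \min_{c} d(v,c)$, so that every codeword satisfies $d(w,c) \ge \mathrm{cr}(C)$. Next I would pass to its antipode $\bar{w} \in X$ furnished by the balanced property, for which $d(w,\bar{w}) = \rho(X)$ and, crucially, $d(c,w) + d(c,\bar{w}) = \rho(X)$ for every $c$. Rearranging gives $d(\bar{w},c) = \rho(X) - d(c,w) \le \rho(X) - \mathrm{cr}(C)$ for all codewords, hence $\max_{c} d(\bar{w},c) \le \rho(X) - \mathrm{cr}(C)$, which is exactly the bound sought. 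Combining with Proposition~\ref{prop:r_cr} yields the equality $r(C) + \mathrm{cr}(C) = \rho(X)$.

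For the second assertion I would feed this equality into the two optimization problems. Writing $\mathrm{cr}(C) = \rho(X) - r(C)$, a code has remoteness at least $t$ precisely when its covering radius is at most $\rho(X) - t$; hence the family of codes over which $m(X,t)$ is minimized coincides with the family over which $M_{\mathrm{cr}}(X,\rho(X)-t)$ is minimized, forcing the two minima to agree. Only the inequality $m(X,t) \le M_{\mathrm{cr}}(X,\rho(X)-t)$ was available from the lower bound alone, as remarked just before the corollary; the balanced equality now delivers the reverse inclusion.

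The individual steps are all one-liners, so there is no computational obstacle; the only genuine idea is the choice of $\bar{w}$. The point to double-check is that the balanced definition indeed guarantees an antipode for the specific vertex $w$ at which the covering radius is attained — but since the definition quantifies over every $v \in X$, this is automatic, and no further regularity or symmetry of $X$ is needed.
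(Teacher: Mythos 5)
Your proposal is correct and follows essentially the same route as the paper: both pick the point $w$ attaining the covering radius and pass to its balanced partner $\bar{w}$ to get $d(\bar{w},c)\le\rho(X)-\mathrm{cr}(C)$ for all codewords, then combine with Proposition~\ref{prop:r_cr}. Your write-up is if anything slightly more careful, since it invokes the balanced identity $d(u,v)+d(u,\bar v)=\rho(X)$ directly (rather than the paper's nominal appeal to the triangle inequality) and spells out why the two optimization problems then coincide.
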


\begin{proof}
There exists $v \in X$ such that $d(v,c) \geq {\rm cr}(C)$ for all $c \in C$, with equality being reached for some codeword in $C$. Then we have $d(\bar{v},c) \leq \rho(X) - {\rm cr}(C)$ for all $c \in C$ by the triangle inequality. Hence $r(C) \leq \rho(X) - {\rm cr}(C)$.
\end{proof}

\section{Remoteness of permutation codes} \label{sec:permutations}

We now consider $X = S_n$ the symmetric group on the first $n$ natural integers, where the distance between two permutations is the Hamming distance: $d(\pi,\sigma) = |\{i: i\pi \ne i\sigma\}|$ for any $\pi,\sigma \in S_n$. Remark that the Hamming distance is invariant under left and right translation: $d(\pi,\sigma) = d(\tau\pi,\tau\sigma) = d(\pi\tau,\sigma\tau)$ for all $\pi,\sigma,\tau \in S_n$. Note that $d(\pi,\sigma) \in \{0\} \cup \{2,3,\ldots,n\}$. A permutation $\pi \in S_n$ can be represented in passive form as a word in $\{1,\ldots,n\}^n$ with coordinates  $1\pi,2\pi, \ldots, n\pi$. We will also use the notation $J\pi = \{j\pi : j \in J\}$ for any set $J \subseteq \{1,\ldots,n\}$.

Subsets of the symmetric group, referred to as {\em permutation codes}, have been intensively studied recently (see the thorough survey in \cite{Cam10} and references therein). In particular, the covering radius of permutation codes has been investigated in \cite{CW05}.

\subsection{Preliminary results} \label{sec:permutations_misc}

First of all, let us consider the remoteness of any pair of permutations. If they are at distance $2$, then the remoteness is clearly $2$. However, when the distance increases, the remoteness may vary for pairs of permutations with the same distance. By translation, we only consider pairs of the form $C = \{(1),\sigma\}$, where $(1)$ denotes the identity. The remoteness of $C$ depends on the cycle structure of $\sigma$, denoted as $T_1,T_2,\ldots,T_k \subseteq \{1,\ldots,n\}$ of respective lengths $l_1,\ldots,l_k$, where $l_c \ge 2$ and $\sum_{c=1}^k l_c = d((1),\sigma) = d$ and $\sigma$ reduces to a cyclic permutation $\sigma_c$ of $T_c$ for all $c$. We are interested in finding a permutation $\pi \in S_n$ which minimizes $\max \{d(\pi,(1)), d(\pi,\sigma)\}$, which we will refer to as a {\em minimal permutation}. Let us first focus on the case where $k=1$, i.e. $\sigma$ is a cyclic permutation.

\begin{lemma} \label{lemma:cyclic_pair}
If $\kappa \in S_n$ is a cyclic permutation and $\pi \in S_n \backslash C$, then
$$
    d(\pi,(1)) + d(\pi,\kappa) \ge n+1.
$$
Conversely, for all $2 \le e \le n-1$, there exists $\tau_e \in S_n$ such that $d(\tau_e,(1)) = e$ and $d(\tau_e,\kappa) = n+1-e$.
\end{lemma}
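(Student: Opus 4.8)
The plan is to prove both directions by a coordinate-by-coordinate accounting of the two Hamming distances, exploiting that $\kappa$, being a single $n$-cycle, is fixed-point-free. For the lower bound, I would write $d(\pi,(1)) + d(\pi,\kappa) = \sum_{i=1}^{n} \left( [i\pi \ne i] + [i\pi \ne i\kappa] \right)$ and bound each summand. Since $\kappa$ is fixed-point-free we have $i \ne i\kappa$ for every $i$, so the single value $i\pi$ can coincide with at most one of $i$ and $i\kappa$; hence each summand is at least $1$, and the total is at least $n$.

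The crux is to upgrade this to $n+1$ by ruling out equality. I would argue that if the total equals $n$, then every summand equals $1$, i.e. $i\pi \in \{i, i\kappa\}$ for all $i$. Partition $\{1,\dots,n\}$ into $A = \{i : i\pi = i\}$ and $B = \{i : i\pi = i\kappa\}$, which are disjoint because $i \ne i\kappa$. The image of $\pi$ is then $A \cup B\kappa$; since $\pi$ is a bijection and $|A| + |B\kappa| = n$, the sets $A$ and $B\kappa$ must be disjoint, whence $B\kappa = B$. Thus $B$ is a union of cycles of $\kappa$; but $\kappa$ is a single $n$-cycle, so $B \in \{\emptyset, \{1,\dots,n\}\}$, giving $\pi = (1)$ or $\pi = \kappa$. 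Both are excluded by $\pi \in S_n \setminus C$, so the total is at least $n+1$. This rigidity step is the main obstacle, as it is precisely where the hypothesis that $\kappa$ is a full cycle, rather than an arbitrary permutation, is essential.

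For the converse, by translation invariance of the Hamming distance I may assume $\kappa = (1\,2\,\cdots\,n)$, so that $i\kappa = i+1$ for $i<n$ and $n\kappa = 1$. Given $2 \le e \le n-1$, I would take $\tau_e$ to be the $e$-cycle $(1\,2\,\cdots\,e)$, fixing $e+1,\dots,n$. Then $\tau_e$ moves exactly the points $1,\dots,e$, so $d(\tau_e,(1)) = e$; and $\tau_e$ agrees with $\kappa$ precisely on the coordinates $1,\dots,e-1$ (it disagrees at $e$, at $e+1,\dots,n-1$, and at $n$, using $e \le n-1$), so $d(\tau_e,\kappa) = n - (e-1) = n+1-e$. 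Verifying these two counts is a routine check and completes the proof.
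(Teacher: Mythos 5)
Your proposal is correct and takes essentially the same route as the paper: equality in the bound forces $i\pi\in\{i,i\kappa\}$ everywhere, and the resulting partition must be $\kappa$-invariant, which is impossible for a proper nonempty subset since $\kappa$ is a single $n$-cycle; your explicit $\tau_e$ (an $e$-cycle on the first $e$ points rather than the last $e$) is a cosmetic variant of the paper's. No gaps.
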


\begin{proof}
It is clear that $d(\pi,(1)) + d(\pi,\kappa) \ge n$ for any $\pi \in S_n$ by the triangle inequality. Suppose $\pi \notin C$ satisfies $d(\pi,(1)) + d(\pi,\kappa) = n$ and let $S =\{j: j\pi = j\}$ be the set of indices on which $\pi$ agrees with the identity, and $\bar{S} = \{1,\ldots,n\} \backslash S$ be the set of indices on which $\pi$ agrees with $\kappa$. Since $\pi \notin C$, we have $\bar{S} \notin \{\emptyset, \{1,\ldots,n\}\}$ and hence there exists $j$ in $\bar{S}\kappa \cap S$. Thus $j\pi = j$ and $j = i\kappa = i\pi$ for some $i \in \bar{S}$ and hence $j = i$ which contradicts the fact that $S$ and $\bar{S}$ are disjoint.

Assuming $\kappa$ is the standard cyclic permutation, $j\kappa = j+1$ for $j \le n-1$ and $n\kappa = 1$. Then define $\tau_e$ as $j\tau_e = j$ for $1 \le j \le n-e$, $j\tau_e = j+1$ for $n-e+1 \le e \le n-1$ and $n\tau_e = n-e+1$. It is easily seen that $\tau_e$ satisfies the claim.
\end{proof}

Note that if $\pi \in C = \{(1),\sigma\}$, then $d((1),\pi) + d(\sigma,\pi) = n$. Hence Lemma~\ref{lemma:cyclic_pair} indicates that we can either minimize the sum of distances between $\pi$ and the pair of codewords (if $\pi \in C$) or try to balance the distances (otherwise) with an additional penalty of $1$ unit of distance. The strategy to obtain a minimal permutation for the general case is hence to pay the minimal amount of penalties. This amount is no more than one, and can even be zero under certain circumstances.

\begin{proposition} \label{prop:lower_r_permutations}
Suppose $d = d((1),\sigma)$ is even, and that we can order the cycle lengths $l_1,\ldots,l_k$ such that there exists $s$ for which $\sum_{c=1}^s l_c = \sum_{c=s+1}^k l_c =\frac{d}{2}$. Then $r(C) = \frac{d}{2}$. Otherwise, $r(C) = \left\lfloor \frac{d}{2} \right\rfloor + 1$. Thus $m(S_n,t) = 2$ for $2 \le t \le \left\lfloor\frac{n}{2}\right\rfloor + 1$.
\end{proposition}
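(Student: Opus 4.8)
The plan is to compute the remoteness $r(C)$ for $C = \{(1), \sigma\}$ by analysing how a candidate permutation $\pi$ splits its agreements between the two codewords across the cycles of $\sigma$. Write $d = \sum_{c=1}^k l_c$. First I would establish the general lower bound $r(C) \ge \lfloor d/2 \rfloor$, which is immediate from $\delta(C)/2 \le r(C)$ together with $d((1),\sigma) = d$, since $d(\pi,(1)) + d(\pi,\sigma) \ge d$ forces $\max\{d(\pi,(1)), d(\pi,\sigma)\} \ge d/2$. The substance of the proposition is determining exactly when this floor can be achieved and when one must pay the extra unit of distance flagged by Lemma~\ref{lemma:cyclic_pair}.

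Next I would reduce the whole problem to the cyclic case cycle-by-cycle. On each cycle $T_c$, the permutation $\sigma$ acts as a cyclic permutation $\sigma_c$ of length $l_c$, and on the complement of $\bigcup_c T_c$ both $(1)$ and $\sigma$ agree, so any sensible $\pi$ fixes those points. Restricting $\pi$ to each $T_c$, Lemma~\ref{lemma:cyclic_pair} applies: on cycle $c$, either $\pi$ agrees wholly with $(1)$ or wholly with $\sigma_c$ (contributing $0$ and $l_c$, or $l_c$ and $0$, to the two distances, with no penalty), or $\pi$ genuinely mixes on that cycle and then $d_c(\pi,(1)) + d_c(\pi,\sigma_c) \ge l_c + 1$, i.e. one penalty unit is incurred on that cycle. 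Summing over cycles, if $P$ denotes the number of cycles on which $\pi$ mixes, then $d(\pi,(1)) + d(\pi,\sigma) \ge d + P$, and within this budget the two distances can be balanced as evenly as the cycle lengths permit. Thus the key quantity is: can we choose, for each non-mixing cycle, an assignment to $(1)$ or to $\sigma$ so that the $l_c$'s split into two groups each summing to $d/2$ (achieving $r(C) = d/2$ with $P=0$), or must we mix at least one cycle?

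The core combinatorial step is therefore the equivalence between achieving $r(C) = \lfloor d/2 \rfloor$ and the existence of a balanced partition of the multiset $\{l_1, \ldots, l_k\}$. If $d$ is even and a subset of cycle lengths sums to exactly $d/2$, assign those cycles to $(1)$ and the rest to $\sigma$ with $P=0$, giving $r(C) = d/2$. Otherwise I must show $r(C) = \lfloor d/2 \rfloor + 1$: on the one hand the balanced value is unattainable without a penalty, so $r(C) > \lfloor d/2 \rfloor$; on the other hand, paying exactly one penalty suffices. For the latter, mix a single cycle $T_1$ and use the freedom granted by the converse part of Lemma~\ref{lemma:cyclic_pair} (choosing $\tau_e$ on that cycle with the appropriate $e$) to fine-tune the two distances to within one of each other, so that $\max\{d(\pi,(1)), d(\pi,\sigma)\} = \lceil (d+1)/2 \rceil = \lfloor d/2 \rfloor + 1$. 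The argument is symmetric whether $d$ is even (no balanced partition) or odd.

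The main obstacle I expect is the tightness bookkeeping in this last construction: verifying that mixing one cycle always yields a distribution realising exactly $\lfloor d/2 \rfloor + 1$ and never more, since the single mixed cycle must be large enough, and its agreements flexible enough, to absorb whatever imbalance the remaining (unmixed) cycles leave. One must check that the leftover cycle lengths can be partitioned so that the two partial sums differ by at most $l_1 - 1$, allowing the mixed cycle to close the gap; this requires the chosen mixing cycle to have length at least the residual imbalance, which a careful choice of which cycle to mix guarantees. Finally, the concluding claim $m(S_n,t) = 2$ for $2 \le t \le \lfloor n/2 \rfloor + 1$ follows by noting that a suitable single cyclic $\sigma$ of the right length realises every such remoteness value by a pair, combined with $m(S_n,t) \ge 2$ for $t \ge 1$ and the general bound $m(X,t) = 2$ for $1 \le t \le \lceil \delta(X)/2 \rceil$ from Section~\ref{sec:all_graphs}, since $\delta(S_n) = n$.
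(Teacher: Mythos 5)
Your proposal is correct and follows the same overall strategy as the paper: the triangle-inequality lower bound, a cycle-by-cycle reduction, the equivalence between achieving $d/2$ and a balanced partition of the cycle lengths, and a construction paying a single penalty via $\tau_e$ on the one cycle whose prefix sum straddles $d/2$. The only real divergence is in how the cycle-by-cycle reduction is justified for the lower bound. The paper first proves, by an explicit swap argument, that some minimal permutation maps each $T_c$ to itself, and only then decomposes it into $\pi_1,\ldots,\pi_k$ and invokes Lemma~\ref{lemma:cyclic_pair} on each cycle. You instead apply Lemma~\ref{lemma:cyclic_pair} to ``the restriction of $\pi$ to $T_c$'' for an arbitrary $\pi$, which is not literally licensed: if $\pi$ sends a point of $T_c$ outside $T_c$, that restriction is not a permutation of $T_c$. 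Your trichotomy is nonetheless true in that case for a more elementary reason --- any position $i$ with $i\pi\notin\{i,i\sigma\}$ contributes $2$ rather than $1$ to $d(\pi,(1))+d(\pi,\sigma)$, so such a cycle already incurs a penalty --- and once that one-line observation is added, your per-position accounting actually renders the paper's swap argument unnecessary for the lower bound, which is a small simplification. You should either state that observation explicitly or import the paper's reduction; as written, the citation of the Lemma carries more weight than it can bear. The upper-bound bookkeeping you flag as the main obstacle is handled in the paper exactly as you anticipate (choose the cycle $s$ with $\sum_{c<s} l_c < d/2 < \sum_{c\le s} l_c$, use $\tau_e$ with $e=\sum_{c\le s} l_c - \lceil d/2\rceil +1$, and treat separately the boundary cases where this $e$ falls outside $[2,l_s-1]$ by taking $\pi_s=\sigma_s$ or $\pi_s=(1)$), and your deduction of $m(S_n,t)=2$ from a single cycle of length $2t-2$ is also the intended one.
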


\begin{proof}
First of all, we have $r(C) \ge \left\lceil \frac{d}{2} \right\rceil$. We now prove that there exists a minimal permutation $\pi$ such that $T_c\pi = T_c$ for all $1 \le c \le k$. Suppose the contrary, i.e. for any minimal permutation $\tau$, there exists a nonempty set of indices $c$ for which $T_c \ne T_c\tau$. For all such $c$, denote the elements of $T_c$ mapped outside of $T_c$ as $\{t_{c,1},\ldots,t_{c,m_c}\}$ and the elements outside of $T_c$ mapped into $T_c$ as $\{s_{c,1},\ldots, s_{c,m_c}\}$. Remark that $t_{c,j}\tau \notin \{t_{c,j},t_{c,j}\sigma\}$ and $s_{c,j}\tau \notin \{s_{c,j},s_{c,j}\sigma\}$ for all $c$ and $j$. Construct the permutation $\tau'$ as $s_{c,j}\tau' = t_{c,j}\tau$, $t_{c,j}\tau' = s_{c,j}\tau$ for all $c,j$ and $a\tau' = a\tau$ for any other $a \in \{1,\ldots,n\}$. Then it is readily checked that $\tau'$ is also minimal, while $T_c = T_c\tau$ for all $c$, which contradicts our hypothesis.

Therefore, $\pi$ can be decomposed into permutations $\pi_1,\ldots,\pi_k$ of $T_1,\ldots,T_k$ respectively. If the assumptions of the first sentence are satisfied, then simply let $\pi_c$ be the identity for $c \le s$ and $\pi_c = \sigma_c$ for $c \ge s+1$, then it is clear that $\pi$ is at distance $\frac{d}{2}$ from both codewords.

Otherwise, a penalty has to be paid, for if $\pi_c$ is the identity for $c \in J$ and $\pi_c =\sigma_c$ for all $c \notin J$ for some set of indices $J \subseteq \{1,\ldots,k\}$, then $d((1),\pi) = \sum_{c \in J} l_c \ne \frac{d}{2}$ (similarly for $\sigma$) while $d((1),\pi) + d(\sigma,\pi) = d$, and the maximum between the two distances is greater than $\frac{d}{2}$. Let us pay the penalty only once: let $1 \le s \le k$ such that $\sum_{c=1}^{s-1} l_c < \frac{d}{2}$ while $\sum_{c=1}^s l_c > \frac{d}{2}$. Again, we let $\pi_c$ be the identity for $c \le s-1$ and $\pi_c = \sigma_c$ for $c \ge s+1$. Denoting $e = \sum_{c=1}^s l_c - \left\lceil \frac{d}{2} \right\rceil + 1$, we then let
$$
    \pi_s = \begin{cases}
        \sigma_s & \mbox{if } \sum_{c=1}^{s-1} l_c = \left\lceil\frac{d}{2}\right\rceil - 1\\
        (1) & \mbox{if } \sum_{c=1}^s l_c = \left\lfloor\frac{d}{2}\right\rfloor + 1\\
        \tau_e & \mbox{otherwise, i.e. if } 2 \le e \le l_s-1.
    \end{cases}
$$
Then it is easy to check that  $\max\{d((1),\pi),d(\sigma,\pi)\} \le \left\lfloor \frac{d}{2} \right\rfloor + 1$.
\end{proof}

We are now interested in a refinement of the $r(C) + \mathrm{cr}(C) \ge n$ bound.

\begin{proposition} \label{prop:strict_triangular}
If $C$ is neither a singleton nor the whole symmetric group $S_n$, then
$$
    r(C) + \mathrm{cr}(C) \ge n +1.
$$
\end{proposition}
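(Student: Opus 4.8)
The plan is to sharpen the general bound $r(C)+\mathrm{cr}(C)\ge \rho(S_n)$ of Proposition~\ref{prop:r_cr} by showing that equality cannot hold once $C$ is nontrivial. First I would record that $\rho(S_n)=n$: by translation invariance $\max_{\sigma}d(\pi,\sigma)=\max_{\tau}d((1),\tau)=n$ for every $\pi$ (the maximum being attained by any permutation differing from $\pi$ in all coordinates), so $\rho(S_n)=\min_{\pi}\max_{\sigma}d(\pi,\sigma)=n$. Thus Proposition~\ref{prop:r_cr} already gives $r(C)+\mathrm{cr}(C)\ge n$, and it suffices to rule out $r(C)+\mathrm{cr}(C)=n$ when $C$ is neither a singleton nor all of $S_n$. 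Assuming this equality, I would set $R=r(C)$, so that $\mathrm{cr}(C)=n-R$, and use left translation (under which both $r$ and $\mathrm{cr}$ are invariant) to place a remoteness-achieving centre at the identity, so that $C\subseteq B_R((1))$; that is, every codeword agrees with $(1)$ in at least $n-R$ positions.

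Next I would probe this configuration with a single $n$-cycle $\kappa$, which is fixed-point-free and hence satisfies $d((1),\kappa)=n$. By the definition of the covering radius there is a codeword $c$ with $d(\kappa,c)\le \mathrm{cr}(C)=n-R$; combining this with $d((1),c)\le R$ and $d((1),\kappa)=n$ forces, through the triangle inequality, both $d((1),c)=R$ and $d(\kappa,c)=n-R$, and makes $d((1),\kappa)\le d((1),c)+d(c,\kappa)$ an equality. The heart of the argument is then a coordinatewise reading of this tightness: since $\kappa$ has no fixed point, for each $i$ exactly one of $ic=i$ or $ic=i\kappa$ holds. Writing $A=\{i:ic=i\kappa\}$ and $\bar A=\{i:ic=i\}$, this says $c$ agrees with $\kappa$ on $A$ and fixes $\bar A$, with $|A|=R$ and $|\bar A|=n-R$.

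The structural punchline is that the moved set of $c$ is exactly $A$ and $c|_A=\kappa|_A$, so $c(A)=A$ gives $\kappa(A)=A$; hence $A$ is $\kappa$-invariant, i.e. a union of cycles of $\kappa$. But $\kappa$ is a single $n$-cycle, whose only invariant subsets have size $0$ or $n$, forcing $R\in\{0,n\}$. If $R=0$ then $C\subseteq B_0((1))=\{(1)\}$ is a singleton, and if $R=n$ then $\mathrm{cr}(C)=0$ forces $C=S_n$; both are excluded, so the equality $r(C)+\mathrm{cr}(C)=n$ is impossible and the strict bound follows.

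I expect the main obstacle to be the coordinatewise characterisation of triangle-equality in the Hamming metric and the resulting deduction that $A$ must be $\kappa$-invariant; the rest is bookkeeping. One small auxiliary point that also needs care is that the remoteness of a non-singleton code is at least $2$, since no two permutations are at distance $1$ and hence a ball of radius at most $1$ contains a single permutation; this is what rules out the degenerate outcome $R=0$ in the non-singleton case and leaves only $R=n$, i.e. $C=S_n$, to exclude.
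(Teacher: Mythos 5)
Your proof is correct, and it takes a genuinely different route from the paper's. The paper proves the pointwise statement that \emph{every} $\pi\in S_n$ has a codeword at distance at least $n+1-\mathrm{cr}(C)$: for each $\pi$ it constructs a cyclic permutation $\kappa$ with $d(\pi,\pi\kappa)=n$, observes that the set of all cyclic permutations multiplied by $\pi$ has remoteness $n$ and so cannot be contained in $C$ (hence $\pi\kappa\notin C$ for some cyclic $\kappa$), takes a codeword $c$ within $\mathrm{cr}(C)$ of $\pi\kappa$, and invokes Lemma~\ref{lemma:cyclic_pair} --- after first disposing of $n\le 3$ and of the boundary cases $r(C)=n$ and $\mathrm{cr}(C)=n$. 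You instead argue by contradiction at a remoteness-achieving centre with a single fixed $n$-cycle, letting the tightness of the triangle inequality and the $\kappa$-invariance of the agreement set $A$ do all the work. Your invariance step is exactly the content of the proof of Lemma~\ref{lemma:cyclic_pair} (a permutation lying ``between'' the identity and an $n$-cycle in the Hamming metric must be one of the two), so the combinatorial core is shared; what your arrangement buys is the elimination of the auxiliary construction of a cyclic permutation at distance $n$ from an arbitrary $\pi$, of the remoteness-$n$ argument for the set of cyclic permutations, and of all the preliminary case analysis --- the degenerate possibilities $c=(1)$ and $c=\kappa$ are absorbed into the conclusion $R\in\{0,n\}$ rather than having to be excluded in advance. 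The paper's version, in exchange, yields the slightly stronger pointwise conclusion for every $\pi$, though that is equivalent for the stated bound.
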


\begin{proof}
First of all, some trivial cases have to be dealt with. The statement is easily verified for $n \le 3$; let us assume $n \ge 4$. Then, if $\mathrm{cr}(C) = n$ either $r(C) = 0$ and hence $C$ is a singleton or $r(C) + \mathrm{cr}(C) \ge n+2$. Similarly if $r(C) = n$ then either $\mathrm{cr}(C) = 0$ hence $C = S_n$ or $r(C) + \mathrm{cr}(C) \ge n+2$.

Let us then assume that $C$ is a code in $S_n$ with $\mathrm{cr}(C) \le n-1$ and $r(C) \le n-1$. Remark that for any $\pi\in S_n$, we can construct a cyclic permutation $\kappa$ at distance $n$ from $\pi$ as follows. Let us express $\pi$ in cycle decomposition: $\pi = (1\, a_2 \, a_3\, \ldots)\ldots(a_j\,\ldots\, a_n)$, then $\kappa = (1\, a_3\, \ldots a_{n-1}\, a_2\, a_4\, \ldots a_n)$ if $n$ is even or $\kappa = (1\, a_3\, \ldots\, a_n\, a_2\, a_4\, \ldots\, a_{n-1})$ if $n$ is odd.

Therefore, for any $\pi \in S_n$, $C$ does not contain the set of all cyclic permutations multiplied by $\pi$ (as such a set has remoteness $n$). There hence exists a cyclic permutation $\kappa$ for which $\pi\kappa \notin C$. Due to the covering radius of $C$, there exists $c \in C$ such that $d(c,\pi \kappa) \le \mathrm{cr}(C)$ and thus $d(c,\pi) \ge n+1-\mathrm{cr}(C)$ by Lemma~\ref{lemma:cyclic_pair} (for $c \ne \pi\kappa$ because $\pi\kappa \notin C$ and $c \ne \pi$ because $d(c,\pi) \ge n-\mathrm{cr}(C) > 0$).
\end{proof}

Equality in Proposition~\ref{prop:strict_triangular} is achieved by many a code, e.g. any ball with radius $r$, with $2 \le r \le n-1$.

\subsection{Bounds} \label{sec:bounds_permutations}

Let us derive a lower bound on $m(S_n,t)$.

\begin{proposition} \label{prop:lower_m}
For $t \le n$, we have
\begin{equation} \label{eq:lower_m} \nonumber
    m(S_n,t) \ge \left\lfloor \frac{2n-t+1}{2(n-t+1)} \right\rfloor + 1.
\end{equation}
\end{proposition}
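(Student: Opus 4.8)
The plan is to prove the contrapositive. First note that $r(C) \le t-1$ holds exactly when some permutation $v$ lies in $\bigcap_{c \in C} B_{t-1}(c)$, i.e.\ when a single $v$ agrees with every codeword in at least $s := n-t+1$ positions (here $d(v,c) \le t-1$ is the same as $|\{i : iv = ic\}| \ge n-t+1$). Writing $k = |C|$, I would record the arithmetic fact that the hypothesis $k \le \lfloor \frac{2n-t+1}{2(n-t+1)} \rfloor$ is equivalent to the clean inequality $(2k-1)(n-t+1) \le n$, that is $(2k-1)s \le n$ (clear the denominator and use $2n-t+1 = n+s$). It therefore suffices to show that whenever $(2k-1)s \le n$ there is a permutation $v$ agreeing with each of $c_1,\dots,c_k$ in at least $s$ positions; this forces $r(C) \le t-1 < t$, so every code of remoteness $t$ must have at least $\lfloor \frac{2n-t+1}{2(n-t+1)} \rfloor + 1$ codewords.

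To construct such a $v$ I would reduce to a disjointness problem: it is enough to find pairwise disjoint position sets $A_1,\dots,A_k \subseteq \{1,\dots,n\}$, each of size $s$, whose image sets $A_1c_1,\dots,A_kc_k$ are also pairwise disjoint. Given these, define $v$ by $jv = jc_\ell$ for $j \in A_\ell$; disjointness of the $A_\ell$ makes $v$ well defined, and disjointness of the $A_\ell c_\ell$ together with injectivity of each $c_\ell$ makes it an injection on $\bigcup_\ell A_\ell$, which extends to a permutation by sending the remaining $n-ks$ positions bijectively onto the remaining $n-ks$ values. By construction $v$ agrees with $c_\ell$ throughout $A_\ell$, so $d(v,c_\ell) \le n-s = t-1$ for every $\ell$.

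The disjoint sets would be chosen greedily, one codeword at a time, and it is here that the permutation constraint does the real work. Suppose $A_1,\dots,A_{\ell-1}$ have been selected and set $U = \bigcup_{\ell'<\ell} A_{\ell'}$ and $W = \bigcup_{\ell'<\ell} A_{\ell'}c_{\ell'}$, so $|U| = |W| = (\ell-1)s$. A position $j$ is usable for $c_\ell$ provided $j \notin U$ (keeping $A_\ell$ disjoint from earlier position sets) and $jc_\ell \notin W$ (keeping the new images disjoint from earlier image sets). The first condition excludes $(\ell-1)s$ positions and the second excludes the $(\ell-1)s$ positions of $Wc_\ell^{-1}$, so at least $n - 2(\ell-1)s$ positions remain usable, and we may pick any $s$ of them as $A_\ell$ provided $n - 2(\ell-1)s \ge s$, i.e.\ $(2\ell-1)s \le n$. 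Since $\ell \le k$ the binding case is $\ell = k$, where this is precisely the hypothesis $(2k-1)s \le n$; hence the greedy procedure never stalls and delivers $A_1,\dots,A_k$.

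The main obstacle, and the feature that dictates the exact shape of the bound, is this bijectivity requirement: at each step one must simultaneously keep a domain position free and its target value free, so two families of $(\ell-1)s$ positions are forbidden rather than one. That factor of two is exactly what produces the $2(n-t+1)$ in the denominator, and the binding case $\ell = k$ is what makes the floor threshold tight within the argument. The only routine points I expect are the arithmetic equivalence of the two forms of the hypothesis and the verification that the partial injection extends to a full permutation.
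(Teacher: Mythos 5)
Your proposal is correct and follows essentially the same route as the paper: a greedy construction of pairwise disjoint agreement sets $A_1,\dots,A_k$ of size $n-t+1$ with pairwise disjoint images, where at each step $2(\ell-1)(n-t+1)$ positions are forbidden, and the threshold $(2k-1)(n-t+1)\le n$ is exactly the paper's condition guaranteeing the greedy step never stalls. The only cosmetic difference is that the paper selects the forbidden set in the value space and pulls back by $c_\ell^{-1}$, whereas you exclude $U\cup Wc_\ell^{-1}$ directly in the position space; these are identical.
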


\begin{proof}
Let $\mu = \left\lfloor \frac{2n-t+1}{2(n-t+1)} \right\rfloor$ and let $C = \{c_1,c_2,\ldots,c_\mu\}$ be a code of $\mu$ permutations. We construct a permutation $\pi$ at Hamming distance at most $t-1$ from all codewords in $C$ recursively as follows. Let $I_1 = \emptyset$, and for all $1 \le j \le \mu$, let $A_j$ be a set of cardinality $n-t+1$ such that $A_j \cap I_j = \emptyset$ and $A_j c_j \cap I_j \pi = \emptyset$; then set $a\pi = ac_j$ for any $a \in A_j$ and update $I_{j+1} = I_j \cup A_j$. Finally, denote $A_{\mu + 1} = \{1,\ldots,n\} \backslash I_{\mu+1}$ and its elements as $\{a_1,\ldots,a_l\}$ and $\{1,\ldots,n\} \backslash I_{\mu+1} \pi = \{b_1,\ldots,b_l\}$; then let $a_i\pi = b_i$ for all $1 \le i \le l$.

We first verify that $A_j$ exists for all $1 \le j \le \mu$. This is done by recursion, where the initial step $j=1$ is trivial. Assume it is true up to $j-1$; We have
\begin{equation} \label{eq:I_j} \nonumber
    |I_jc_j \cup I_j\pi| \le |I_jc_j| + |I_j\pi| = 2(j-1)(n-t+1) \le t-1.
\end{equation}
Therefore, there exists a set $B_j$ of cardinality $n-t+1$ which does not intersect $I_jc_j \cup I_j\pi$. Let $A_j = B_jc_j^{-1}$, then $|A_j| = n-t+1$, $A_j \cap I_j = \emptyset$, and $A_jc_j \cap I_j\pi = \emptyset$. Finally, $A_{\mu+1}$ is well defined, for $|I_{\mu+1}| \leq n$.

Second, we verify that $\pi$ is indeed a permutation by considering two distinct numbers $1 \le a < b \le n$. Either $a,b \in A_j$ for some $j$ and $a\pi = ac_j \ne bc_j = b\pi$; or $a \in A_i$ and $b \in A_j$ for some $i \ne j$ and hence $b\pi \notin A_i\pi$, from which $b\pi \ne a\pi$.
\end{proof}

Let us now design a code with high remoteness by using rows of a Latin square. Recall that a Latin square of order $n$ is an $n \times n$ array over $\{1,\ldots,n\}$ such that any element of $\{1,\ldots,n\}$ appears in each row and each column \cite[Ch. 6]{Cam94}. The cyclic Latin square has as first row the elements $1$ to $n$ in increasing order, and each row is obtained from the previous one by a cyclic shift to the left. In other words, the rows of the cyclic Latin square are the passive forms of the elements of the group generated by the standard cyclic permutation.

\begin{proposition} \label{prop:partial_latin}
Let $C$ be the first $k$ rows of a Latin square of order $n$, then
$$
    r(C) \ge n - \left\lfloor \frac{n}{k} \right\rfloor.
$$
Furthermore, if $k \equiv 0 \mod 2$ and $n \equiv 0 \mod k$ and $C$ consists of the first $k$ rows of the cyclic Latin square of order $n$, then $r(C) \ge n - \frac{n}{k} + 1$. We obtain $m(t) \le \frac{n}{n-t+1}$ if $n-t+1 \, | \, \frac{n}{2}$ and $m(t) \le \left\lfloor \frac{n}{n-t+1} \right\rfloor + 1$ if $n-t+1 \, \nmid \, \frac{n}{2}$ and $t \le n-1$.
\end{proposition}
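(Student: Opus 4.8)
The plan is to exploit the defining property of a Latin square: since each symbol occurs exactly once in each column, the $k$ codewords $c_1,\ldots,c_k$ take pairwise distinct values in every coordinate. Consequently, for any fixed $\pi \in S_n$ and any coordinate $i$, the value $i\pi$ can coincide with at most one of $ic_1,\ldots,ic_k$. Writing $a_j = n - d(\pi,c_j)$ for the number of coordinates on which $\pi$ agrees with $c_j$, and summing over the $n$ coordinates, this gives $\sum_{j=1}^k a_j \le n$. By the pigeonhole principle $\min_j a_j \le \lfloor n/k \rfloor$, hence $\max_j d(\pi,c_j) = n - \min_j a_j \ge n - \lfloor n/k \rfloor$. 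As this holds for every $\pi$, taking the minimum over $\pi$ yields $r(C) \ge n - \lfloor n/k \rfloor$, which is the first claim.

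For the refinement I would work with the cyclic Latin square, where $c_j = \kappa^{j-1}$ and $i\kappa = i+1 \pmod n$, so that $\pi$ agrees with $c_j$ at coordinate $i$ precisely when the displacement $\delta(i) := (i\pi - i) \bmod n$ equals $j-1$. Suppose for contradiction that some $\pi$ achieves $\min_j a_j \ge n/k$ (recall $k \mid n$ here). Combined with $\sum_j a_j \le n$, this forces $a_j = n/k$ for every $j$; in particular $\delta(i) \in \{0,\ldots,k-1\}$ for all $i$, and each value in $\{0,\ldots,k-1\}$ is attained by exactly $n/k$ coordinates. The crux is then a parity count: on one hand $\sum_i \delta(i) \equiv \sum_i (i\pi - i) \equiv 0 \pmod n$ since $\pi$ is a permutation, while on the other hand a perfectly balanced distribution gives $\sum_i \delta(i) = \frac{n}{k}\sum_{j=0}^{k-1} j = \frac{n(k-1)}{2}$. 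When $k$ is even this equals $(n/2)(k-1)$, an odd multiple of $n/2$ and hence not a multiple of $n$, a contradiction. Therefore $\min_j a_j \le n/k - 1$ for every $\pi$, giving $r(C) \ge n - n/k + 1$. I expect this parity step to be the main obstacle, as it is exactly what upgrades the generic pigeonhole bound by one unit, and it is the only place where the evenness of $k$ and the cyclic structure are genuinely used.

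Finally, to obtain the bounds on $m(t)$ I would read off the least admissible number of rows. In general, forcing $n - \lfloor n/k \rfloor \ge t$ requires $\lfloor n/k \rfloor \le n-t$, equivalently $k > n/(n-t+1)$, whose least integer solution is $k = \lfloor n/(n-t+1) \rfloor + 1$; the first $k$ rows of any Latin square then give $m(t) \le \lfloor n/(n-t+1) \rfloor + 1$, valid for $t \le n-1$ since this keeps $k \le n$. When $n-t+1 \mid n/2$, set $k_0 = n/(n-t+1)$: this $k_0$ divides $n$, and it is even because $n/(n-t+1)$ being even is equivalent to $(n-t+1)\mid (n/2)$. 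Thus the cyclic refinement of the second paragraph applies and yields $r(C) \ge n - n/k_0 + 1 = t$ with only $k_0$ codewords, whence $m(t) \le n/(n-t+1)$.
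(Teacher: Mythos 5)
Your proposal is correct and follows essentially the same route as the paper: the first bound comes from the same averaging/pigeonhole count of agreements (at most one row per column), and the refinement for the cyclic Latin square is the same congruence argument modulo $n$, which you phrase via the displacements $\delta(i)=i\pi-i$ where the paper uses the equivalent function $\Delta(i,j)=L(i,j)-i-j+1$. Your explicit derivation of the $m(t)$ bounds, which the paper leaves implicit, is also correct.
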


\begin{proof}
Let $c_i$ denote the $i$-th row of a Latin square. For any $\pi \in S_n$, we have $\sum_{i=1}^n d(\pi,c_i) \ge n(k-1)$ and hence there exists $c_j$ such that $d(\pi,c_j) \ge \frac{n(k-1)}{k}$. This proves the first result. Now, let $k \equiv 0 \mod 2$ and $n \equiv 0 \mod k$ and let $C$ be the first $k$ rows of the cyclic Latin square of order $n$. Suppose there exists $\pi \in S_n$ such that $d(\pi,c_i) \le \frac{n(k-1)}{k}$ for all $1 \le i \le k$. By the argument above, there actually has to be equality for all $i$, hence for all $1 \le j \le k$, there exists $i_j$ such that $j\pi = jc_{i_j}$. Denoting the content of the $(i,j)$ cell of the Latin square as $L(i,j)$ and $\Delta(i,j) = L(i,j) - i - j + 1 \mod n$, then $\Delta$ is identically zero on the cyclic Latin square. We have
$$
    0 = \sum_{j=1}^n \Delta(i_j,j) \equiv \sum_{z=1}^n z - \frac{n}{k}\sum_{i_j=1}^k i_j - \sum_{j=1}^n j \mod n \equiv - n\frac{k+1}{2} \mod n,
$$
which contradicts the fact that $k$ is even. Thus, $r(C) \ge \frac{n(k-1)}{k} + 1$.
\end{proof}

A {\em transversal} in a Latin square of order $n$ is a collection of $n$ positions of the square comprising one from each row and one from each column, such that the symbols in those positions are distinct. A transversal can hence be viewed as a permutation $\pi \in S_n$ at Hamming distance $n-1$ from all rows.

\begin{corollary} \label{cor:cyclic_latin}
The set of rows of a Latin square has remoteness $n-1$ if it has a transversal and remoteness $n$ otherwise. We obtain $m(S_n,n-1) \le n$ for all $n$, and if $n$ is even then $m(S_n,n) \le n$.
\end{corollary}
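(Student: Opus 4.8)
The plan is to reduce the entire statement to the equivalence in the first sentence, and then read off the bounds on $m$ from explicit Latin squares. Let $C$ be the full set of $n$ rows $c_1,\dots,c_n$ of a Latin square of order $n$. First I would fix the range of $r(C)$: taking $k=n$ in Proposition~\ref{prop:partial_latin} gives $r(C)\ge n-\lfloor n/n\rfloor=n-1$, while $r(C)\le n$ is immediate since every distance in $S_n$ is at most $n$. As distances are integral, $r(C)\in\{n-1,n\}$, so the task is to decide when $r(C)=n-1$. Now $r(C)=n-1$ holds exactly when some $\pi\in S_n$ has $\max_i d(\pi,c_i)\le n-1$, i.e. $d(\pi,c_i)\ne n$ for every $i$, which says precisely that $\pi$ agrees with each row in at least one position.

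The heart of the argument is a partition observation. For a fixed $\pi$ set $A_i=\{j : j\pi=jc_i\}$, the columns on which $\pi$ agrees with row $i$. For each column $j$ the symbol $j\pi$ occurs exactly once in column $j$ of the square, so there is exactly one row $i$ with $jc_i=j\pi$; hence $A_1,\dots,A_n$ partition $\{1,\dots,n\}$ and $\sum_i|A_i|=n$. If every $A_i$ is nonempty, then every $A_i$ is therefore a singleton, so the $n$ agreement columns are distinct and carry the distinct symbols $j\pi$, i.e. they form a transversal. Conversely a transversal supplies a $\pi$ meeting each row (exactly once), so every $A_i\ne\emptyset$. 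This establishes $r(C)=n-1$ when the square has a transversal and $r(C)=n$ otherwise, which is the first sentence.

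For the bounds on $m$ I would exhibit suitable squares. Any transversal-bearing square gives, by the equivalence, a code of $n$ rows with remoteness exactly $n-1$, whence $m(S_n,n-1)\le n$. For odd $n$ the cyclic square does the job: its diagonal entries satisfy $jc_j\equiv 2j-1\pmod n$, and since $\gcd(2,n)=1$ these are distinct, so the diagonal is a transversal. For even $n$ one instead appeals to the existence of a Latin square of order $n$ with a transversal (for instance the Cayley table of a group of order $n$ admitting a complete mapping). For the last claim, let $n$ be even and let $C$ be the cyclic square; taking $k=n$ in Proposition~\ref{prop:partial_latin} (legitimate as $n$ is even and $n\mid n$) gives $r(C)\ge n-\tfrac{n}{n}+1=n$, so $r(C)=n$ and its $n$ rows yield $m(S_n,n)\le n$.

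The step I expect to be the main obstacle is the even-order half of $m(S_n,n-1)\le n$: unlike the odd case there is no single canonical square, so one must invoke the nontrivial but classical existence of transversal-bearing Latin squares of every even order at least $4$ (the order $n=2$, where the target remoteness $n-1=1$ is not even attainable, being the only genuine exception). By contrast the partition observation driving the first sentence is elementary once one notices that using all $n$ rows forces the agreement sets to tile the columns.
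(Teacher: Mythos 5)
Your argument is correct and follows the paper's route for the core equivalence: the lower bound $r(C)\ge n-1$ comes from Proposition~\ref{prop:partial_latin} with $k=n$, and the identification of permutations within distance $n-1$ of every row with transversals settles the first sentence. In fact your partition observation (the agreement sets $A_i$ tile the columns, so ``all nonempty'' forces ``all singletons'') makes explicit a step the paper leaves implicit, namely that a permutation at distance \emph{at most} $n-1$ from every row must actually be at distance exactly $n-1$ from every row, hence a transversal; that is a genuine improvement in rigour. Where you diverge is the bound $m(S_n,n-1)\le n$ for even $n$. The paper's definition of $m(X,t)$, read together with the surrounding text (``minimum cardinality of a code with remoteness \emph{at least} $t$'', and the claim that $m$ is non-decreasing), is the ``at least $t$'' version; under that reading the cyclic Latin square of any order $n$ already gives $m(S_n,n-1)\le n$ directly from Proposition~\ref{prop:partial_latin}, with no need for a transversal. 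Your detour through transversal-bearing squares of even order is therefore unnecessary, and your suggested witness --- the Cayley table of a group of order $n$ with a complete mapping --- actually fails for every $n\equiv 2\pmod 4$ (the Sylow $2$-subgroup of any such group is cyclic of order $2$, so by Hall--Paige no complete mapping exists; $n=6$ is the first case). The existence of \emph{some} Latin square of order $n$ with a transversal for all $n\ne 2$ is nonetheless a true classical fact, so your conclusion stands even under the stricter ``exactly $t$'' reading, and your flagging of $n=2$ as the degenerate case is a fair observation about that reading; but the paper's intended argument avoids the issue entirely.
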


For $n$ odd, the case of the full remoteness is not covered by our constructions based on Latin squares. However, we can add more codewords to a Latin square to reach a remoteness of $n$. For $n \ge 5$, \cite{WW06} indicates that there exists a Latin square of order $n$ (referred to as a confirmed bachelor) which contains an entry through which no transversal passes.

\begin{proposition} \label{prop:extended_latin}
Let $n \ge 5$ be odd and let $C \subseteq S_n$ consist of the rows of a confirmed bachelor Latin square of order $n$. Let $D = \{(2i-1 \, 2i): 1 \le i \le \frac{n-1}{2}\}$, then $r(C \cup D) = n$. Therefore, $m(S_n,n) \le \frac{3n-1}{2}$.
\end{proposition}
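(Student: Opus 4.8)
The plan is to show that $\max_{c \in C\cup D} d(\pi,c) = n$ for every $\pi \in S_n$; since $n$ is the largest possible distance, this gives $r(C\cup D)=n$, and the cardinality bound follows at once from $|C\cup D| \le n + \frac{n-1}{2} = \frac{3n-1}{2}$. The first step is a reduction. If $\pi$ is \emph{not} at distance $n$ from some row, then $\pi$ agrees with every row in at least one position. Because each column of a Latin square contains every symbol once, $\pi$ agrees with exactly one row in each of the $n$ columns; a pigeonhole count then forces $\pi$ to agree with each row exactly once, in distinct columns and with distinct symbols. Thus $\pi$ is precisely a transversal of the square. Since a transversal is at distance $n-1$ from all rows, only transversals can fail to be covered by $C$, and the whole task reduces to catching every transversal with some transposition in $D$.

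Before doing so I would normalise the confirmed bachelor by an isotopy, which preserves both the Latin property and the property that no transversal passes through the marked cell. First relabel the symbols so that the row containing the marked cell becomes the identity, then reorder the rows so that this is row $n$, and finally apply the \emph{same} permutation to columns and symbols to move the marked cell to position $(n,n)$. This last step keeps row $n$ equal to the identity (a simultaneous column--symbol relabelling fixes the identity permutation) while making the symbol there equal to $n$. After this normalisation $L(n,j)=j$ for all $j$, and no transversal passes through $(n,n)$.

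The key structural fact is then that \emph{every transversal fixes exactly one point, and that point is not $n$}. Since row $n$ is the identity, the cells carrying ``symbol $=$ column'' all lie in row $n$ (symbol $j$ occurs in column $j$ only there); a transversal meets row $n$ in a single column $j^*$, where $j^*\pi = L(n,j^*)=j^*$, so $\pi$ fixes $j^*$ and nothing else. Moreover $j^* \ne n$, for $j^*=n$ would mean $\pi$ uses the marked cell $(n,n)$, which is forbidden. Hence the unique fixed point $j^*$ lies in $\{1,\dots,n-1\}$, and therefore in exactly one pair $\{2i-1,2i\}$.

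Finally I would check that such a $\pi$ is at distance $n$ from $\tau_i = (2i-1 \, 2i)$, where $j^*\in\{2i-1,2i\}$. Distance $n$ requires that $\pi$ disagree with $\tau_i$ everywhere, i.e.\ that $\pi$ fix no point outside $\{2i-1,2i\}$ and map neither $2i-1\mapsto 2i$ nor $2i\mapsto 2i-1$. The first condition holds because $j^*$ is the only fixed point; the second holds because $j^*$ already occupies one of the images $2i-1, 2i$, so injectivity of $\pi$ blocks the crossed image. This covers every transversal, and combined with the reduction it yields $r(C\cup D)=n$. The main obstacle is not this last verification but finding the normalisation that ties the \emph{unpaired} point $n$ to the marked cell: once row $n$ is the identity and the marked cell sits at $(n,n)$, every transversal is forced to have a single fixed point lying inside one of the transposition pairs, which is exactly what makes the small set $D$ suffice.
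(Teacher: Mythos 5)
Your proof is correct and follows essentially the same route as the paper's: reduce to transversals (any non-transversal misses some row entirely), normalise the confirmed bachelor so that the row through the marked cell is the identity with that cell on the diagonal at $(n,n)$, observe that every transversal then has a unique fixed point lying in $\{1,\dots,n-1\}$, and kill it with the matching transposition in $D$. You spell out the normalisation and the unique-fixed-point step more carefully than the paper does (the paper nominally places the identity in row $1$ while the marked cell sits in row $n$, which your isotopy argument tidies up), but the underlying argument is the same.
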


\begin{proof}
By permutation of rows and columns, let us assume that the entry through which no transversal passes is $(n,n)$. Also, by renaming entries, we can assume that the first row of the confirmed bachelor Latin square is the identity. If $\pi \in S_n$ is not a transversal of that Latin square, then there exists a row of the Latin square at distance $n$ from it. Otherwise, $\pi$ agrees with the identity in exactly one position, say $j \leq n-1$: we have $j\pi = j$ and $i\pi \ne i$ for $i \ne j$. Then it is easily checked that $d(\pi, (2k-1 \, 2k)) = n$, where $j \in \{2k-1, 2k\}$.
\end{proof}

Now let us consider the cartesian product of two codes. Let $C_1, C_2$ be two codes in $S_{n_1}$ and $S_{n_2}$, respectively. Their cartesian product $C = C_1 \times C_2 \subseteq S_{n_1 + n_2}$ is the set of permutations $c = (c_1,c_2)$, where $ic = ic_1$ for $1 \le i \le n_1$ and $ic = (i-n_1)c_2 + n_1$ for $n_1 + 1 \le i \le n_2$.

\begin{proposition} \label{prop:cartesian}
For all $C_1, C_2$, we have
\begin{equation} \label{eq:cartesian} \nonumber
    r(C_1 \times C_2) = r(C_1) + r(C_2).
\end{equation}
\end{proposition}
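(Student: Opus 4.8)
The plan is to prove the two inequalities $r(C_1 \times C_2) \le r(C_1) + r(C_2)$ and $r(C_1 \times C_2) \ge r(C_1) + r(C_2)$ separately, exploiting the fact that the Hamming distance on $S_{n_1+n_2}$ splits additively across the two coordinate blocks $\{1,\ldots,n_1\}$ and $\{n_1+1,\ldots,n_1+n_2\}$ whenever a permutation respects that block decomposition. The key structural observation is that for permutations of the special form $\pi = (\pi_1,\pi_2)$ (i.e.\ preserving each block), we have the clean identity $d((\pi_1,\pi_2),(c_1,c_2)) = d(\pi_1,c_1) + d(\pi_2,c_2)$, since the two blocks of coordinates contribute independently.

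For the upper bound, I would take minimal permutations $\pi_1 \in S_{n_1}$ and $\pi_2 \in S_{n_2}$ witnessing $r(C_1)$ and $r(C_2)$ respectively, so that $\max_{c_1 \in C_1} d(\pi_1,c_1) = r(C_1)$ and similarly for $\pi_2$. Forming $\pi = (\pi_1,\pi_2) \in S_{n_1+n_2}$ and using the additive identity above, every codeword $(c_1,c_2) \in C_1 \times C_2$ satisfies $d(\pi,(c_1,c_2)) = d(\pi_1,c_1) + d(\pi_2,c_2) \le r(C_1) + r(C_2)$, which gives $r(C_1 \times C_2) \le r(C_1) + r(C_2)$ directly.

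The lower bound is the main obstacle, because an arbitrary minimal permutation $\pi$ for $C_1 \times C_2$ need \emph{not} preserve the two blocks, so the clean additive identity fails and we cannot immediately decompose $\pi$. My plan here mirrors the block-straightening argument used in the proof of Proposition~\ref{prop:lower_r_permutations}: I would argue that there is always a minimal permutation which \emph{does} respect the block decomposition. Concretely, given any minimal $\pi$, I would swap pairs of ``crossing'' values—an index in the first block sent into the second and a matching index in the second block sent into the first—to produce a permutation $\pi'$ that preserves both blocks without increasing the distance to any codeword. This is plausible because each codeword $(c_1,c_2)$ itself preserves the blocks, so a crossing coordinate of $\pi$ can never agree with any codeword there; rerouting it inside its own block can only help. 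Once $\pi$ is replaced by such a block-preserving $\pi' = (\pi_1',\pi_2')$, the additive identity applies, and taking the maximum over codewords gives $\max_{c} d(\pi',c) = \max_{c_1} d(\pi_1',c_1) + \max_{c_2} d(\pi_2',c_2) \ge r(C_1) + r(C_2)$. Since $\pi'$ is still minimal, $r(C_1 \times C_2) \ge r(C_1) + r(C_2)$, completing the proof. The delicate point to verify carefully is that the swapping operation can be carried out simultaneously on all crossing coordinates to yield a genuine permutation and that it never raises the distance to a codeword; this is exactly the kind of bookkeeping handled in the construction of $\tau'$ in Proposition~\ref{prop:lower_r_permutations}.
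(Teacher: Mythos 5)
Your proposal is correct and rests on the same additive-decomposition idea as the paper's proof: the upper bound via a pair of minimal permutations $\pi_1,\pi_2$ and the identity $d((\pi_1,\pi_2),(c_1,c_2))=d(\pi_1,c_1)+d(\pi_2,c_2)$ is identical to what the paper does. For the lower bound the paper states the additivity identity only for block-preserving $\pi$ and then simply asserts the conclusion for arbitrary $\pi\in S_{n_1+n_2}$, whereas you explicitly supply the missing reduction — pairing the (equally many) $A\to B$ and $B\to A$ crossing indices and swapping their images, which cannot increase the distance to any codeword since a crossing coordinate never agrees with a codeword — so your write-up is, if anything, more complete than the paper's one-line argument.
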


\begin{proof}
For all $\pi = (\pi_1,\pi_2) \in S_{n_1 + n_2}$ and $c \in C$, we have $d(\pi,c) = d(\pi_1,c_1) + d(\pi_2,c_2)$. Thus for any $\pi \in S_{n_1 + n_2}$, there exists a codeword at distance at least $r(C_1) + r(C_2)$ from $\pi$; conversely, if $\pi_1$ and $\pi_2$ are minimal for $C_1$ and $C_2$ respectively then $d(\pi,c) = r(C_1) + r(C_2)$.
\end{proof}

\begin{corollary} \label{cor:fixed_points}
If all codewords $\pi \in C$ satisfy $i_l\pi = j_l$ for $l=1,\ldots,k$, then by translation we may assume $i_l = j_l$ for $l = 1,\ldots,k$; the remoteness is unaffected by restriction to $\{1,\ldots,n\} \backslash \{i_1,\ldots,i_k\}$.
\end{corollary}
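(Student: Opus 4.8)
The plan is to reduce the statement to a single application of Proposition~\ref{prop:cartesian}, after first disposing of the translation step. For the first clause, I would observe that since every codeword is a bijection sending the distinct points $i_1,\ldots,i_k$ to $j_1,\ldots,j_k$, these images are themselves distinct, so there is a fixed permutation $\tau \in S_n$ with $j_l\tau = i_l$ for $l = 1,\ldots,k$. Replacing $C$ by the right translate $C\tau$, every codeword $\pi\tau$ then satisfies $i_l(\pi\tau) = (i_l\pi)\tau = j_l\tau = i_l$; that is, all codewords fix $i_1,\ldots,i_k$. Because the Hamming distance is invariant under right translation, $r(C\tau) = r(C)$, so no generality is lost in assuming $i_l = j_l$, i.e. that $I = \{i_1,\ldots,i_k\}$ is fixed pointwise by every codeword.

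Next I would exhibit $C$ as a cartesian product. A permutation fixing $I$ pointwise maps the complement $J = \{1,\ldots,n\} \setminus I$ bijectively onto itself, so restricting each codeword to $J$ yields a well-defined code $C'$ in the symmetric group on $J$, which I identify with $S_{n-k}$. Relabelling the points by a fixed permutation — an operation that preserves every Hamming distance, and hence the remoteness — I may assume $I = \{1,\ldots,k\}$ and $J = \{k+1,\ldots,n\}$. Under this labelling $C$ is precisely the cartesian product $\{(1)\} \times C'$, where $\{(1)\} \subseteq S_k$ is the singleton consisting of the identity on $I$ and $C' \subseteq S_{n-k}$ is the restriction.

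Finally I would invoke Proposition~\ref{prop:cartesian} together with the fact, noted immediately after the definition of remoteness, that a singleton code has remoteness $0$. This gives
\[
    r(C) = r(\{(1)\} \times C') = r(\{(1)\}) + r(C') = r(C'),
\]
so the remoteness is indeed unchanged upon restriction to $J$.

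I expect the only delicate point to be the bookkeeping in the second paragraph: one must verify that restricting codewords to $J$ is well defined and that the relabelling genuinely identifies $C$ with the cartesian product in the precise sense of Proposition~\ref{prop:cartesian}. Once that identification is in place the conclusion is immediate, since the trivial factor on the fixed points contributes nothing to the remoteness.
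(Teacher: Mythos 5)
Your proposal is correct and matches the paper's intended argument: the paper states this corollary immediately after Proposition~\ref{prop:cartesian} with no written proof, precisely because it follows by translating so that the $k$ points are fixed, identifying $C$ with $\{(1)\}\times C'$, and applying $r(\{(1)\}\times C') = r(\{(1)\}) + r(C') = r(C')$. Your care about right-translation invariance and relabelling preserving Hamming distance fills in the bookkeeping the paper leaves implicit.
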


The remoteness satisfies some inequalities analogous to the Singleton bound for the minimum distance of codes.

\begin{proposition} \label{prop:m(n-1)}
We have
\begin{equation} \label{eq:m(n-1)} \nonumber
    m(S_{n-1},t-2) \le m(S_n,t) \le m(S_{n-1},t).
\end{equation}
\end{proposition}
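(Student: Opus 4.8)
The plan is to treat the two inequalities separately: the upper bound is an immediate consequence of the earlier results on fixed points, while the lower bound requires a puncturing-type construction together with a careful distance estimate.

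For the upper bound $m(S_n,t) \le m(S_{n-1},t)$, I would take an optimal code $C' \subseteq S_{n-1}$ with $r(C') = t$ and $|C'| = m(S_{n-1},t)$ and embed it into $S_n$ by adjoining a fixed point: write $C = C' \times \{(1)\}$, where the second factor is the identity of $S_1$ acting on $\{n\}$. By Proposition~\ref{prop:cartesian} (or equivalently Corollary~\ref{cor:fixed_points}), $r(C) = r(C') + 0 = t$ while $|C| = |C'|$, so $m(S_n,t) \le |C| = m(S_{n-1},t)$.

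The lower bound $m(S_{n-1},t-2) \le m(S_n,t)$ is the substantive half. Let $C \subseteq S_n$ be optimal, so $r(C) = t$ and $|C| = m(S_n,t)$. I would introduce a contraction map deleting the point $n$: for $c \in S_n$ set $\alpha = nc^{-1}$ and $\beta = nc$, and define $\hat c \in S_{n-1}$ by $\alpha \hat c = \beta$ and $i\hat c = ic$ for $i \neq \alpha$ (when $c$ fixes $n$ this is just the restriction to $\{1,\ldots,n-1\}$). A routine check shows $\hat c$ is a well-defined permutation of $\{1,\ldots,n-1\}$. Put $C'' = \{\hat c : c \in C\}$, so that $|C''| \le |C|$. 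The crux is to establish $r(C'') \ge t-2$. Given any $\tau \in S_{n-1}$, I would lift it to $\pi \in S_n$ fixing $n$, i.e.\ $i\pi = i\tau$ for $i \le n-1$ and $n\pi = n$. Since $r(C) = t$, there is $c \in C$ with $d(\pi, c) \ge t$; granting the distance estimate $d(\tau, \hat c) \ge d(\pi, c) - 2$ then gives $d(\tau,\hat c) \ge t-2$, whence $r(C'') \ge t-2$ and therefore $m(S_{n-1}, t-2) \le |C''| \le m(S_n,t)$.

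The main obstacle, and the reason the bound loses two rather than one, is precisely this distance estimate. Because $\pi$ fixes $n$, we have $\hat\pi = \tau$, so the only coordinates at which contraction can alter agreements between the two permutations are the deleted coordinate $n$ and the preimage $\alpha = nc^{-1}$. A short case analysis, according to whether $c$ fixes $n$, then shows that passing from the pair $(\pi,c)$ to $(\tau,\hat c)$ can destroy at most the two disagreements that occur at position $n$ and at position $\alpha$, so no more than two disagreements are lost. It is worth stressing in the write-up that lifting $\tau$ arbitrarily rather than fixing $n$ would, under a naive contraction, allow a loss of three (the two preimages of $n$ could each convert into agreements on top of the deleted coordinate); fixing the deleted point is exactly what confines the loss to two.
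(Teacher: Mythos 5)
Your proof is correct and follows essentially the same route as the paper's: for the upper bound, lift an optimal code from $S_{n-1}$ to $S_n$ by adjoining the fixed point $n$, and for the lower bound, contract codewords at $n$ and observe that, provided the test permutation is lifted so as to fix $n$, at most two disagreements (at position $n$ and at $nc^{-1}$) can be lost. The only cosmetic difference is that you invoke Proposition~\ref{prop:cartesian} (equivalently Corollary~\ref{cor:fixed_points}) for the upper bound where the paper redoes the short two-case distance computation directly.
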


\begin{proof}
For any $\pi \in S_n$, we define the permutation $\pi_1 \in S_{n-1}$ as $i\pi_1 = i\pi$ for all $i \in \{1,\ldots,n-1\} \backslash \{n\pi^{-1}\}$, and $(n\pi^{-1})\pi_1 = n\pi$ if $n\pi^{-1} \ne n$. Conversely, any permutation in $S_{n-1}$ can be expressed as $\pi_1$ for some $\pi \in S_n$ fixing $n$.

We first prove the upper bound. Let $C_1 \subseteq S_{n-1}$ be a code with remoteness at least $t$ and cardinality $m(S_{n-1},t)$ and let $C = \{c \in S_n : c_1 \in C_1\}$. We shall prove that $r(C) \ge t$ by considering any permutation $\pi \in S_n$. If $\pi$ fixes $n$, then $d(\pi,c) = d(\pi_1,c_1)$ for all $c$ and hence there exists $c$ such that $d(\pi,c) \ge t$. Otherwise, there exists $c$ such that $d(\pi,c) \ge 1 + d(\pi_1,c_1) \ge t+1$. Thus $m(S_n,t) \le |C| = |C_1| = m(S_{n-1},t)$.

We now prove the lower bound. Let $C \subseteq S_n$ be a code with remoteness $t$ and cardinality $m(S_n,t)$. For any $\pi_1 \in S_{n-1}$, we have $d(\pi_1,c_1) \ge d(\pi,c) - 2$. Let $C = \{c_1: c \in C\}$, then $C$ has remoteness at least $t-2$ and hence $m(S_{n-1},t-2) \le |C_1| \le |C| = m(S_n,t)$.
\end{proof}

By using the passive form, a permutation in $S_n$ can be viewed as a word in the Hamming graph $H(n,n)$. It immediately follows that if $C$ is a permutation code, then $r(S_n,C) \ge r(H(n,n),C)$. On the other hand, some codewords can be added to $C$ to produce a remote code for the Hamming graph.

\begin{proposition} \label{prop:m(H)<m(S)}
We have
\begin{equation} \label{eq:m(H)<m(S)} \nonumber
    m(H(n,n),n) \le n + m(S_n,n).
\end{equation}
\end{proposition}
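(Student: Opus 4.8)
The plan is to exhibit, given any code $C' \subseteq H(n,n)$ of size $m(H(n,n),n)$ with remoteness $n$ in the Hamming graph, a way to build a permutation code of remoteness $n$ in $S_n$ by adding only a bounded number of extra codewords; but since the statement goes the other way — it bounds the \emph{Hamming} quantity in terms of the \emph{permutation} quantity — I would instead start from an optimal permutation code in $S_n$ and convert it to a Hamming code by adding extra words. So first I would take $C \subseteq S_n$ with $r(S_n,C) = n$ and $|C| = m(S_n,n)$, guaranteed to exist (with full remoteness $n$) by the constructions of the previous subsection. This $C$ is already a set of words in $H(n,n)$, and its Hamming remoteness satisfies $r(H(n,n),C) \le r(S_n,C) = n$, which by the maximality of the Hamming distance is in fact $\le n$ with $n$ being the maximum possible value. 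The issue is that $r(H(n,n),C)$ could be strictly less than $n$: there may be a (non-permutation) word $w \in \{1,\ldots,n\}^n$ lying at Hamming distance at most $n-1$ from every codeword of $C$, even though no \emph{permutation} does. To force Hamming remoteness up to $n$ I must add further words to $C$ that leave no such $w$ available.

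The key step is therefore to cover the ``escape words'' — those non-permutation words that are Hamming-close to all of $C$ — by adjoining at most $n$ new codewords. The natural candidates are the $n$ constant words $\mathbf{j} = (j,j,\ldots,j)$ for $j = 1,\ldots,n$. For any word $w \in \{1,\ldots,n\}^n$, the Hamming distance $d(w,\mathbf{j})$ equals $n$ minus the number of coordinates of $w$ equal to $j$; summing over $j$, $\sum_{j=1}^n d(w,\mathbf{j}) = n^2 - n = n(n-1)$, so by averaging there exists some $j$ with $d(w,\mathbf{j}) \ge n-1$, with equality exactly when $w$ uses each symbol once (i.e.\ $w$ is a permutation) or is otherwise balanced. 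I would argue that for any $w$ that is \emph{not} a permutation, some symbol is missing, so some $\mathbf{j}$ has $d(w,\mathbf{j}) = n$; hence adjoining all $n$ constant words ensures that every non-permutation word $w$ is at Hamming distance $n$ from at least one of the $\mathbf{j}$. Combined with the fact that $C$ already guarantees every \emph{permutation} $w$ is at distance $n$ from some codeword of $C$, the augmented code $C \cup \{\mathbf{1},\ldots,\mathbf{n}\}$ has Hamming remoteness exactly $n$.

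Putting the pieces together, $m(H(n,n),n) \le |C \cup \{\mathbf{1},\ldots,\mathbf{n}\}| \le |C| + n = m(S_n,n) + n$, which is the claimed inequality. I expect the main obstacle to be the precise case analysis for which words $w$ attain $d(w,\mathbf{j}) \ge n$: one must verify that \emph{every} non-permutation word really is covered at distance $n$ by some constant word, and separately confirm that the permutations are handled by $C$ itself, being careful that no $w$ slips through by being balanced-but-not-a-permutation in a way that keeps it at distance $n-1$ from all constant words simultaneously. A non-permutation word necessarily omits at least one symbol $j_0$, so $d(w,\mathbf{j_0}) = n$, which resolves this cleanly; the only real care needed is checking that the construction of $C$ from the earlier propositions genuinely delivers a full-remoteness permutation code of the stated size for every $n$.
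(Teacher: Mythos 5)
Your construction is exactly the paper's: adjoin the $n$ constant words to an optimal permutation code of remoteness $n$, note that any non-permutation word omits some symbol $a$ and hence lies at distance $n$ from the constant word $(a,\ldots,a)$, while any permutation word is handled by $C$ itself. The argument is correct and matches the paper's proof; the averaging digression over the constant words is unnecessary but harmless.
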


\begin{proof}
Let $C \subseteq S_n$ be a permutation code with remoteness $n$ and let $D \subseteq H(n,n)$ be defined as $D = \{d_a = (a,a,\ldots,a): 1 \le a \le n\}$. Viewing the permutations in $C$ in passive form as words in $H(n,n)$, we shall prove that $r(C \cup D) = n$. Let $v \in H(n,n)$; there are two cases. First, if $v$ also represents a permutation, then there exists a codeword in $C$ at distance $n$ from $v$. Otherwise, a coordinate value $a$ is not on any coordinate of $v$, and $d(v,d_a) = n$.
\end{proof}


\section{Remoteness of permutation groups} \label{sec:groups}

\subsection{Groups generated by one element}

Let us now consider the remoteness of a group $G$ generated by one element $g$. In view of Corollary~\ref{cor:fixed_points}, we assume that $g$ has no fixed points. Let $T_1,\ldots,T_k$ of lengths $l_1,\ldots,l_k$ denote the cycle decomposition of $g$. Then it is easily seen that $r(G) \ge n-k$ and that $r(G) = n-k$ if and only if there is a permutation $\pi \in S_n$ at distance $n-k$ from all the elements of $G$.

\begin{theorem} \label{th:one_generator}
If $g$ is an even permutation, then $r(\langle g \rangle) = n-k$; if $g$ is an odd permutation, then $r(\langle g \rangle) = n-k+1$.
\end{theorem}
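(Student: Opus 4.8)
The plan is to translate everything into fixed-point counts. Writing $\mathrm{Fix}(\sigma)=\{i:i\sigma=i\}$ and using $d(\pi,g^m)=n-|\mathrm{Fix}(\pi g^{-m})|$, the quantity to control is $r(\langle g\rangle)=n-\max_{\pi}\min_{0\le m<N}|\mathrm{Fix}(\pi g^{-m})|$, where $N=|\langle g\rangle|=\mathrm{lcm}(l_1,\dots,l_k)$. First I would record the averaging identity
\[
\sum_{m=0}^{N-1}|\mathrm{Fix}(\pi g^{-m})|=\sum_{c=1}^{k}\frac{N}{l_c}\,\bigl|\{i\in T_c:i\pi\in T_c\}\bigr|\le kN ,
\]
with equality exactly when $\pi$ fixes every cycle $T_c$ setwise. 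This reproves $r(\langle g\rangle)\ge n-k$ and shows that $\min_m|\mathrm{Fix}(\pi g^{-m})|=k$ can only occur for a cycle-preserving $\pi$ for which $|\mathrm{Fix}(\pi g^{-m})|$ is \emph{constantly} equal to $k$. Throughout, $E$ denotes the number of even-length cycles; since $\mathrm{sgn}(g)=(-1)^{n-k}$ and $n-k\equiv E \pmod 2$, the hypothesis ``$g$ even'' is ``$E$ even''.

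For cycle-preserving $\pi$ I would pass to per-cycle displacement profiles: identifying $T_c$ with $\mathbb Z/l_c$ so that $g$ acts as $x\mapsto x+1$, set $f_c(i)=i\pi-i \bmod l_c$ and $n_c(r)=|\{i:f_c(i)=r\}|$, so that $|\mathrm{Fix}(\pi g^{-m})|=\sum_c n_c(m\bmod l_c)=:F(m)$. The even case ($E$ even) is then settled by an explicit construction: on odd cycles take $\pi_c:x\mapsto 2x$, a complete mapping giving $n_c\equiv 1$; pair up the even cycles and on one member of each pair take $x\mapsto -x$ (displacements $-2x$, hence $n_c=2$ on even residues and $0$ on odd) and on the other $x\mapsto -x-1$ (hence $n_c=2$ on odd residues and $0$ on even). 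Each pair contributes the constant $2$ and each odd cycle the constant $1$, so $F\equiv k$ and $r(\langle g\rangle)=n-k$. The same construction with $E-1$ of the even cycles paired off and one even cycle left carrying the profile ``$2$ on odd residues'' gives, in the odd case, $\min_m F(m)=k-1$, hence $r(\langle g\rangle)\le n-k+1$.

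The remaining and main obstacle is the matching lower bound in the odd case: no cycle-preserving $\pi$ can achieve $F\equiv k$ when $E$ is odd. I would prove this with a single first-moment identity modulo $N$. The key elementary fact is that $\sum_{i\in T_c} f_c(i)=\sum_i(i\pi-i)$ is a nonnegative multiple of $l_c$, so $\sum_r r\,n_c(r)\equiv 0\pmod{l_c}$. Expanding $m=r+tl_c$ then gives
\[
\sum_{m=0}^{N-1} m\,n_c(m\bmod l_c)\equiv \tfrac{N}{2}\,(N-l_c)\equiv \tfrac{N}{2}\,[\,l_c\ \text{odd}\,]\pmod N ,
\]
where $N$ is even because $E\ge 1$. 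Summing over $c$ yields $\sum_{m} mF(m)\equiv \tfrac N2(k-E)\pmod N$, whereas $F\equiv k$ forces $\sum_m mF(m)=k\binom{N}{2}\equiv -k\tfrac N2\pmod N$. Equating the two gives $\tfrac N2 E\equiv 0\pmod N$, i.e. $E$ even, contradicting $E$ odd. Thus every $\pi$ (cycle-preserving or not, the latter handled by the strict averaging inequality) satisfies $\min_m|\mathrm{Fix}(\pi g^{-m})|\le k-1$, so $r(\langle g\rangle)\ge n-k+1$ and the odd case follows. I expect the cycle-reduction bookkeeping and the congruence computation to be where care is needed; everything else is either the averaging bound or an explicit permutation.
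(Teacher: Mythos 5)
Your proof is correct and follows essentially the same route as the paper's: cycle-by-cycle decomposition, an averaging bound giving $r \ge n-k$, explicit per-cycle constructions that pair up the even-length cycles for the upper bounds, and a first-moment congruence modulo $|G|$ ruling out a constantly-$k$ fixed-point profile when the number of even cycles is odd. Your packaging is marginally cleaner in two spots --- the averaging identity directly forces any extremal $\pi$ to be cycle-preserving, where the paper invokes a swapping argument, and the $x\mapsto -x$, $x\mapsto -x-1$ constructions avoid the paper's case split on the parity of $l_c/2$ --- but the underlying ideas coincide.
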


\begin{proof}
We first prove the following claim: Let $\kappa$ be a cyclic permutation on $\{0,\ldots,2m-1\}$, then there exist $\pi_0, \pi_1 \in S_{2m}$ such that
\begin{eqnarray*}
    d(\pi_0,\kappa^a) &=& \left\lbrace \begin{array}{ll} 2m - 2 & \mbox{if } a \mbox{ is even}\\ 2m & \mbox{if } a \mbox{ is odd}, \end{array} \right.\\
    d(\pi_1,\kappa^a) &=& \left\lbrace \begin{array}{ll} 2m & \mbox{if } a \mbox{ is even}\\ 2m - 2 & \mbox{if } a \mbox{ is odd}. \end{array} \right.
\end{eqnarray*}

We assume that $\kappa$ is the standard cyclic permutation, i.e. $i\kappa^a = i + a$ (all operations are modulo $2m$). We differentiate on the parity of $m$. If $m$ is even, then let $i\pi_0 = 3i$ and $i\pi_1 = 3i+1$ for all $0 \leq i \leq 2m-1$. We prove that $\pi_0$ is indeed a permutation at distance $2m-2$ from all even powers of $\kappa$ and at distance $2m$ from all odd powers. First, $\pi_0$ is a permutation since $3i = 3j$ implies $i=j$. Second, $i\kappa^a = i\pi_0$ if and only if $a = 2i$: all even values of $a$ occur twice. The proof for $\pi_1$ is similar. If $m$ is odd, let $i\pi_0 = i + 2 \left\lfloor \frac{i}{2} \right\rfloor$ and $i\pi_1 = i\pi_0 + 1$ for all $i$. It is easily checked that $\pi_0$ and $\pi_1$ are permutations satisfying the claims.

We now prove that $r(G) = n-k$ if $g$ is even and $r(G) \le n-k+1$ if $g$ is odd. Let $g$ be an even permutation, i.e. there is an even number of even cycle lengths, say $2s$. Let us construct a permutation $\pi$ such that $d(\pi,g^a) = n-k$ for all $0 \le a \le |G|-1$. Let $\pi$ reduce to $\pi_0$ for the first $s$ cycles of even lengths, to $\pi_1$ for the other $s$ cycles of even lengths, and let $\pi$ reduce to a transversal of the cyclic Latin square for all the $k-2s$ odd cycles. Since $g^a$ reduces to $\kappa^\lambda$, where $\kappa$ is a cyclic permutation and $\lambda \equiv a \mod l_c$, on $T_c$ for all $1 \le c \le k$, we have
\begin{equation} \nonumber
    d(\pi,g^a) = \sum_{c=1}^s d(\pi_0,\kappa^a) + \sum_{c=s+1}^{2s} d(\pi_1,\kappa^a) + \sum_{c=2s+1}^k (l_c-1) = \sum_{c=1}^k l_c - k = n-k.
\end{equation}
If $g$ is an odd permutation, then without loss $l_1$ is even and the restriction $g'$ of $g$ on $T_2 \cup \ldots \cup T_k$ is even. Therefore, there exists $\pi'$ at distance $n-l_1-k+1$ from all the powers of $g'$, and by extension there exists $\pi \in S_n$ at distance at most $n-k+1$ from all powers of $g$.

Let us finally prove that when $g$ is an even permutation, then there is no permutation $\pi$ such that $d(\pi,g^i) = n-k$ for all $0 \le i \le |G| - 1$. We show it by contradiction. First, by an argument similar to that for Proposition~\ref{prop:lower_r_permutations}, we can show that there is always a minimal permutation $\pi$ which restricts to a permutation on all $T_c$'s. Let $\chi(i,j) = 1$ if $jg^i = j\pi$ and $\chi(i,j) = 0$ otherwise. Thus
\begin{equation} \label{eq:sum1} 
    \sum_{j=0}^{n-1} \sum_{i=0}^{|G| - 1} i \chi(i,j)= k \sum_{i=0}^{|G| - 1} i = k |G| \frac{|G|-1}{2}.
\end{equation}
For any cycle $T_c$ of length $l_c$, denote $m_c = \frac{|G|}{l_c}$. For all $j \in T_c$, we can express $\{i: j\pi = jg^i\}$ as $\{i' + al_c: 0 \le a \le m_c-1\}$ for some $i'$ with $0 \leq i' \leq l_c-1$. We obtain
\begin{eqnarray}
    \nonumber
    \sum_{j=0}^{n-1} \sum_{i=0}^{|G| - 1} i\chi(i,j) &=& \sum_{j=0}^{n-1} \sum_{a=0}^{m_c-1} (i' + a l_c)\\
    \nonumber
    &=& \sum_{j=0}^{n-1} i' m_c + \sum_{j=0}^{n-1} |G| \frac{m_c-1}{2}\\
    \label{eq:sum2}
    &=& \sum_{j=0}^{n-1} i' m_c + k \frac{|G|^2}{2} - n\frac{|G|}{2}.
\end{eqnarray}
Combining (\ref{eq:sum1}) and (\ref{eq:sum2}), we obtain
\begin{equation} \label{eq:sum22}
    \sum_{j=0}^{n-1} i' m_c = |G| \frac{n-k}{2}.
\end{equation}

On the other hand, for all $j \in T_c$, denote $j' = j - \sum_{b=1}^{c-1} l_b$ (so that $j'$ ranges from $0$ to $l_c-1$) and $Z_j = (j' + i') \mod l_c = j\pi - \sum_{b=1}^{c-1} l_b$. Note that $Z_j$ also ranges from $0$ to $l_c-1$. Thus,
\begin{equation} \label{eq:sum3}
    \sum_{j=0}^{n-1} m_c Z_j = \sum_{j=0}^n m_c j' = \sum_{c=1}^k m_c \sum_{j'=0}^{l_c-1} j' = \sum_{c=1}^k |G| \frac{l_c-1}{2} = |G|\frac{n-k}{2}.
\end{equation}
Finally, combining (\ref{eq:sum22}) and (\ref{eq:sum3}) yields
$$
|G| \frac{n-k}{2} = \sum_{j=0}^{n-1} m_c Z_j \equiv \sum_{j=0}^{n-1} m_c (i' + j') \mod |G| = |G|(n-k) \equiv 0 \mod |G|.
$$
Therefore, $n-k$ is even. However, this is equivalent to: there are an even number of cycle lengths. Indeed, denote the number of even-length cycles as $E$ and that of odd cycles as $O$, where $E+O = k$. We have $n \equiv O \mod 2$ and hence $n-k \equiv O - (E + O) \equiv E \mod 2$.
\end{proof}

\subsection{Transitive groups}

\begin{proposition} \label{prop:transitive}
A transitive group has remoteness $n-1$ if and only if it has covering radius $n-1$; otherwise, it has remoteness $n$.
\end{proposition}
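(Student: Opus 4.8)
The plan is to reduce the entire statement to a single averaging identity: for a transitive $G \le S_n$ and every $\pi \in S_n$, the mean of the distances $d(\pi,g)$ over $g \in G$ equals $n-1$. Writing $\mathrm{fix}(\tau) = |\{i : i\tau = i\}|$ and using $d(\pi,g) = n - \mathrm{fix}(\pi g^{-1})$, this is equivalent to $\sum_{g \in G} \mathrm{fix}(\pi g^{-1}) = |G|$. I would establish it by interchanging the order of summation: $\sum_{g\in G}\mathrm{fix}(\pi g^{-1}) = \sum_{i=1}^{n} |\{g \in G : i\pi g^{-1} = i\}| = \sum_{i=1}^n |\{g \in G : ig = i\pi\}|$, and for each $i$ the set $\{g \in G : ig = i\pi\}$ consists of the group elements sending the point $i$ to the point $i\pi$; by transitivity it is nonempty, hence a coset of the point stabiliser $G_i$, of size $|G_i| = |G|/n$ by orbit--stabiliser. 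Summing the $n$ equal terms yields $|G|$. This is the one place transitivity enters, and it is the only genuine computation in the argument; I expect it to be the crux, with everything else formal.

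Granting the identity, every $\pi$ satisfies $\frac{1}{|G|}\sum_{g \in G} d(\pi,g) = n-1$, and comparing the extreme values of this list to its mean gives both a priori bounds simultaneously. Since $\max_g d(\pi,g) \ge n-1$ for all $\pi$, we get $r(G) = \min_\pi \max_g d(\pi,g) \ge n-1$; together with the trivial $r(G) \le n$ this already shows $r(G) \in \{n-1,n\}$, which is the ``two values'' phenomenon. Since $\min_g d(\pi,g) \le n-1$ for all $\pi$, we get $\mathrm{cr}(G) = \max_\pi \min_g d(\pi,g) \le n-1$.

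For the equivalence I would invoke the elementary fact that a list of numbers all lying weakly on one side of its own mean must be constant. If $r(G) = n-1$, choose $\pi$ attaining it; then $d(\pi,g) \le n-1$ for all $g$, and as these distances average $n-1$ they are all equal to $n-1$, so $\min_g d(\pi,g) = n-1$, giving $\mathrm{cr}(G) \ge n-1$ and hence $\mathrm{cr}(G) = n-1$. Conversely, if $\mathrm{cr}(G) = n-1$, choose $\pi$ attaining it; then $d(\pi,g) \ge n-1$ for all $g$, so again all the distances equal $n-1$, whence $\max_g d(\pi,g) = n-1$, giving $r(G) \le n-1$ and hence $r(G) = n-1$. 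This proves the ``if and only if''. Finally, if $\mathrm{cr}(G) \ne n-1$ then, by the contrapositive of the implication just established, $r(G) \ne n-1$; since $r(G) \in \{n-1,n\}$, this forces $r(G) = n$, which is the ``otherwise'' clause. The degenerate small cases (e.g.\ $n \le 1$) are checked directly. Note that this route is self-contained and does not require Proposition~\ref{prop:strict_triangular}, which by itself is too weak to separate the mixed cases.
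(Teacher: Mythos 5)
Your proof is correct and rests on exactly the same idea as the paper's: the average of $d(\pi,g)$ over $g\in G$ equals $n-1$ for every $\pi$ (the paper invokes this as ``the coset version of the orbit-counting lemma''), after which both bounds and the equivalence follow by comparing extremes to the mean. The only difference is presentational: you derive the identity from orbit--stabiliser and obtain $\mathrm{cr}(G)\le n-1$ from the same averaging, where the paper cites an external reference for that bound, and you spell out both directions of the equivalence that the paper leaves terse.
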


\begin{proof}
Let $G$ be a transitive group; we know that $\mathrm{cr}(G) \le n-1$ by \cite[Proposition 15]{CW05}. By the coset version of the orbit-counting lemma, the average distance between any permutation $\pi \in S_n$ and to $G$ is $n-1$. Therefore, $r(G) \ge n-1$ with equality if and only if there exists $\pi \in S_n$ such that $d(\pi,g) = n-1$ for all $g \in G$ and hence $\mathrm{cr}(G) \ge n-1$.
\end{proof}

\begin{corollary} \label{cor:2-transitive}
Any $2$-transitive group has remoteness $n$. If $G$ acts regularly, then the Hall-Paige conjecture \cite{HP55} proved in \cite{Bra,Eva09,Wil09} implies that $r(G) = n-1$ if and only if its Sylow $2$-subgroup is non-cyclic.
\end{corollary}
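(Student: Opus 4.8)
The plan is to reduce both assertions to Proposition~\ref{prop:transitive}, which tells us that a transitive group has remoteness $n-1$ exactly when its covering radius is $n-1$ --- equivalently, when some $\pi \in S_n$ satisfies $d(\pi,g) = n-1$ for every $g \in G$ --- and remoteness $n$ otherwise.

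For the first assertion I would show that a $2$-transitive group $G$ has covering radius at most $n-2$. Fix any $\pi \in S_n$ and pick two distinct points $i,j$; then $i\pi \ne j\pi$, and by $2$-transitivity there is some $g \in G$ with $ig = i\pi$ and $jg = j\pi$, so $\pi$ and $g$ agree in at least two coordinates and $d(\pi,g) \le n-2$. Hence no $\pi$ can lie at distance $n-1$ from all of $G$, and Proposition~\ref{prop:transitive} forces $r(G) = n$.

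For the second assertion, suppose $G$ is regular, so $|G| = n$ and I may identify the point set with $G$ itself, letting $G$ act by right multiplication ($x \cdot g = xg$). By Proposition~\ref{prop:transitive}, $r(G) = n-1$ iff there is a permutation $\pi$ of $G$ with $d(\pi,g) = n-1$ for all $g$; since regularity gives the averaging identity $\sum_{g \in G} d(\pi,g) = n(n-1)$, any $\pi$ whose distances are all at least $n-1$ must in fact realise $d(\pi,g) = n-1$ for every $g$, i.e.\ $\pi$ agrees with each $g \in G$ in exactly one coordinate. The crucial step is to translate this into the language of complete mappings: setting $\eta(x) = x^{-1}(x\pi)$, the unique element of $G$ agreeing with $\pi$ at $x$ is precisely $\eta(x)$, so ``$\pi$ agrees with each $g$ exactly once'' says that $\eta$ is a bijection, while $x \mapsto x\pi = x\,\eta(x)$ is automatically a bijection because $\pi$ is a permutation. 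Thus such a $\pi$ exists iff $\eta$ is a complete mapping of $G$, and conversely any complete mapping yields such a $\pi$ via $x\pi = x\,\eta(x)$.

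Finally I would invoke the Hall--Paige theorem: $G$ admits a complete mapping iff its Sylow $2$-subgroup is trivial or non-cyclic. Combined with the equivalence above this gives $r(G) = n-1$ precisely in that case, and $r(G) = n$ when the Sylow $2$-subgroup is non-trivial cyclic. I expect the main obstacle to be stating the correspondence with complete mappings cleanly and confirming that ``exactly one agreement'' is the right condition (this is what the averaging identity pins down). A secondary point to flag is the trivial Sylow case: for $|G|$ odd the subgroup is trivial, Hall--Paige still supplies a complete mapping, and so $r(G)=n-1$, consistent with the odd-order analysis; accordingly the word ``non-cyclic'' in the statement is to be read as excluding only the non-trivial cyclic case.
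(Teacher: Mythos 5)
Your proposal is correct and follows the route the paper intends: the first claim is Proposition~\ref{prop:transitive} plus the observation that a $2$-transitive group has covering radius at most $n-2$, and the second is the standard translation (for a regular group identified with its point set) of ``some $\pi$ is at distance $n-1$ from every group element'' into the existence of a complete mapping, which Hall--Paige characterises by the Sylow $2$-subgroup. Your side remark that ``non-cyclic'' must be read as ``trivial or non-cyclic'' is a fair and accurate gloss on the paper's phrasing.
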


We remark that if a transitive permutation group $G$ has remoteness $n-1$, then for any $1 \le i \le n$, ${\rm Stab}_G(i)$, acting on the remaining $n-1$ points, has covering radius $n$.

We have shown in Proposition~\ref{prop:partial_latin} that the remoteness of the cyclic group $C_n$ acting on $n$ elements has remoteness $n-1$ when $n$ is odd and remoteness $n$ when $n$ is even. The dihedral group $D_{2n}$ is treated in the next proposition.

\begin{proposition} \label{prop:dihedral}
We have $r(D_{2n}) = n-1$ if $n$ is congruent to $1$ or $5$ modulo $6$ and $r(D_{2n}) = n$ otherwise.
\end{proposition}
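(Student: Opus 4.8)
The plan is to apply Proposition~\ref{prop:transitive}: since $D_{2n}$ is transitive, $r(D_{2n})=n-1$ precisely when there is a permutation $\pi$ at distance $n-1$ from every group element, and $r(D_{2n})=n$ otherwise. I realise $D_{2n}$ on $\mathbb{Z}_n$ as the $n$ rotations $i\mapsto i+a$ and the $n$ reflections $i\mapsto b-i$ (arithmetic mod $n$). Because the average distance from $\pi$ to the group is $n-1$ (the coset orbit-counting argument used in Proposition~\ref{prop:transitive}) while every distance is at most $n-1$, such a $\pi$ must in fact \emph{agree with each group element in exactly one coordinate}. I would then translate this into two clean conditions: $\pi$ meets every rotation once iff the map $i\mapsto i\pi-i$ is a bijection of $\mathbb{Z}_n$ (its value at $i$ records which rotation $\pi$ meets there), and $\pi$ meets every reflection once iff $i\mapsto i\pi+i$ is a bijection. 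So the whole problem reduces to: \emph{for which $n$ does there exist a permutation $\pi$ of $\mathbb{Z}_n$ with both $i\mapsto i\pi-i$ and $i\mapsto i\pi+i$ bijective?}

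For the sufficiency half I expect an explicit linear witness to suffice. When $n\equiv1,5\pmod 6$, i.e.\ $\gcd(n,6)=1$, take $\pi\colon i\mapsto 2i$; then $i\pi-i=i$ and $i\pi+i=3i$ are both bijections since $2$ and $3$ are invertible mod $n$, and one checks directly that $\pi$ meets each rotation $i\mapsto i+a$ at $i=a$ and each reflection $i\mapsto b-i$ at the unique solution of $3i=b$. Hence $r(D_{2n})=n-1$ in these cases.

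For the necessity half I would rule out the two obstructing families separately. If $n$ is even, summing over $i$ gives $\sum_i(i\pi+i)=2\sum_i i\equiv0\pmod n$, whereas any bijection has coordinate sum $\tfrac{n(n-1)}{2}\equiv\tfrac{n}{2}\not\equiv0\pmod n$; so $i\mapsto i\pi+i$ cannot be a bijection (this is the classical absence of complete mappings in even cyclic groups). If instead $3\mid n$ (so $n$ is odd, else we are already done), I would pass to second moments: for any permutation $\phi$ of $\mathbb{Z}_n$ one has $\sum_i\phi(i)^2\equiv Q\pmod n$, where $Q=\sum_{r=0}^{n-1}r^2$. Applying this to $\pi$, to $i\pi+i$ and to $i\pi-i$, and using the identity $(i\pi+i)^2+(i\pi-i)^2=2(i\pi)^2+2i^2$, summation yields $2Q\equiv4Q\pmod n$, hence $2Q\equiv0$ and, $n$ being odd, $Q\equiv0\pmod n$. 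A short $3$-adic computation then produces the contradiction: when $3\mid n$ the factors $n-1$ and $2n-1$ in $6Q=n(n-1)(2n-1)$ are prime to $3$, so the $3$-adic valuation of $Q$ is exactly $v_3(n)-1<v_3(n)$, whence $Q\not\equiv0\pmod n$.

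The main obstacle is the case $3\mid n$. A naive first-order count modulo $3$ (matching the residues of $\pi$, $i\pi+i$ and $i\pi-i$) only forces a uniform distribution of residue pairs and fails to give a contradiction once $9\mid n$; the point is to upgrade to the second-moment identity above, where the arithmetic of $Q=\sum r^2$ modulo $n$ settles every multiple of $3$ uniformly. Combining the two halves shows that a valid $\pi$ exists exactly when $\gcd(n,6)=1$, i.e.\ $n\equiv1,5\pmod 6$, and Proposition~\ref{prop:transitive} then gives $r(D_{2n})=n-1$ in those cases and $r(D_{2n})=n$ otherwise.
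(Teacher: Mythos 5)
Your proposal is correct and follows essentially the same route as the paper: reduce via Proposition~\ref{prop:transitive} to finding $\pi$ with both $i\mapsto i\pi-i$ and $i\mapsto i\pi+i$ bijective on $\mathbb{Z}_n$, exhibit $i\mapsto 2i$ when $\gcd(n,6)=1$, kill even $n$ by the first-moment (complete-mapping) obstruction, and kill $3\mid n$ by the second-moment congruence $Q\equiv 0\pmod n$ for $Q=\sum r^2$. The only cosmetic differences are that the paper handles even $n$ by citing $C_n\le D_{2n}$ and Proposition~\ref{prop:partial_latin}, derives the same congruence via $\sum a_jb_j=\sum((j\pi)^2-j^2)$ rather than your parallelogram identity, and merely asserts the final non-divisibility that you justify with the $3$-adic valuation.
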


\begin{proof}
For ease of presentation, assume that $D_{2n}$ acts on $\mathbb{Z}_n$. The whole dihedral group can be viewed as two Latin squares: the cyclic Latin square formed by elements $\kappa^a$ for $0 \le a \le n-1$, where $j\kappa^a = a + j$, and a second Latin square formed by elements $\sigma\kappa^b$ for $0 \le b \le n-1$ where $j\sigma\kappa^b = b-j$. First, $C_n \le D_{2n}$, so $r(D_{2n}) \ge n-1$ for all $n$ and $r(D_{2n}) = n$ for $n$ even. Second, for $n$ odd and not a multiple of $3$, we can easily show that the permutation $\pi$ defined as $j\pi = 2j$--i.e., the diagonal of the cyclic Latin square--is at distance $n-1$ from all the elements of the dihedral group (it agrees with $\kappa^j$ and $\sigma\kappa^{3j}$ on position $j$). Third, let $n$ be an odd multiple of $3$, and suppose that there exists $\pi \in S_n$ at distance $n-1$ from all the elements of $D_{2n}$. It agrees on position $j$ with $\kappa^{a_j}$ where $a_j=j\pi-j$, and with $\sigma\kappa^{b_j}$ where $b_j = j\pi+j$. Denoting the square pyramidal number $P = \sum_{j=0}^{n-1} j^2 = n\frac{(n-1)(2n-1)}{6}$, we have $\sum_{j=0}^{n-1} (2j)^2 \equiv P \mod n$ and hence
\begin{eqnarray*}
    P &\equiv& \sum_{j=0}^{n-1} 4(j\pi)^2 \mod n\\
    &\equiv& \sum_{j=0}^{n-1} (a_j+b_j)^2 \mod n\\
    &\equiv& 2P + \sum_{j=0}^{n-1} a_j b_j \mod n\\
    &\equiv& 2P + \sum_{j=0}^{n-1} ((j\pi)^2-j^2) \mod n\\
    &\equiv& 2P \mod n.
\end{eqnarray*}
However, $P = n \frac{2n^2 - 3n + 1}{6}$ is not a multiple of $n$ when $n$ is a multiple of $3$, which is the desired contradiction.
\end{proof}

\subsection{Transitive groups of odd order}

Let $G$ act on $\Omega=\{1,\ldots,n\}$. An \emph{orbital} of $G$ is an orbit
of $G$ on ordered pairs. The number of orbitals is the \emph{rank} of $G$.
If $G$ is transitive on $\Omega$, there is one diagonal orbital consisting of
all pairs $(x,x)$ for $x\in \Omega$. The edge set of any $G$-invariant graph
or digraph is a union of orbitals.

There is a natural bijection between the orbitals of a transitive group and
the orbits of the stabiliser of a point: if $O$ is an orbital, the set
$\{y:(x,y)\in O\}$ is an orbit of the stabiliser of $x$.

\begin{proposition} \label{prop:orbitals}
Let $G$ be a transitive permutation group of degree $n$. Then the
permutation $\pi$ satisfies $d(g,\pi)=n-1$ for all $g\in G$ if and only
if, for every non-diagonal orbital $O$, $(x,y)\in O$ implies
$(x\pi,y\pi)\notin O$.
\end{proposition}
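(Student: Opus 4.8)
The plan is to reformulate the distance condition in terms of the number of coordinates on which $\pi$ agrees with a group element, and then to recognise "two agreements" as an orbital statement. For $g\in G$ write $a(g)=|\{i : i\pi = ig\}| = n - d(g,\pi)$ for the number of agreements between $\pi$ and $g$. Thus the left-hand side, ``$d(g,\pi)=n-1$ for all $g$'', is precisely ``$a(g)=1$ for all $g\in G$''.

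First I would record the counting identity $\sum_{g\in G} a(g) = |G|$. This is just the orbit-counting statement used in the proof of Proposition~\ref{prop:transitive} (the average distance from $\pi$ to $G$ is $n-1$): by transitivity, for each coordinate $i$ there are exactly $|G|/n$ elements $g$ with $ig = i\pi$, and summing $\sum_i |\{g : ig = i\pi\}| = n\cdot|G|/n = |G|$ gives the claim. The consequence I want to extract is that, since there are $|G|$ nonnegative integers $a(g)$ summing to $|G|$, the condition ``$a(g)=1$ for all $g$'' is equivalent to the one-sided condition ``$a(g)\le 1$ for all $g$''. This is the key reduction: it replaces the rigid equality defining the left-hand side by an inequality that is easy to translate combinatorially.

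It then remains to show that ``$a(g)\le 1$ for all $g$'' is equivalent to the orbital condition, and this is a direct unwinding of definitions in both directions. If some $g$ has $a(g)\ge 2$, pick distinct $x,y$ with $x\pi=xg$ and $y\pi=yg$; letting $O$ be the orbital of $(x,y)$, which is non-diagonal since $x\ne y$, the $G$-invariance of $O$ gives $(x\pi,y\pi)=(xg,yg)\in O$, violating the orbital condition. Conversely, if the orbital condition fails there is a non-diagonal orbital $O$ and $(x,y)\in O$ with $(x\pi,y\pi)\in O$; since these lie in the same $G$-orbit there is $g\in G$ with $xg=x\pi$ and $yg=y\pi$, and $x\ne y$ shows $a(g)\ge 2$. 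Chaining the equivalences (orbital condition $\iff$ $a(g)\le 1$ for all $g$ $\iff$ $a(g)=1$ for all $g$ $\iff$ $d(g,\pi)=n-1$ for all $g$) completes the argument.

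I expect no single hard obstacle here; the result is essentially a change of language. The one step that genuinely uses the group structure rather than pure bookkeeping is the averaging identity $\sum_{g} a(g)=|G|$, which is what lets me weaken ``exactly one agreement'' to ``at most one agreement''. Without it, the backward direction (orbital condition $\Rightarrow$ distance $n-1$) would only yield $d(g,\pi)\ge n-1$ rather than equality, since a priori some $g$ might agree with $\pi$ on no coordinate at all.
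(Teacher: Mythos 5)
Your proof is correct and is essentially the paper's argument: the paper also combines the averaging fact (the average of $d(g,\pi)$ over $g\in G$ is $n-1$, which is exactly your identity $\sum_g a(g)=|G|$) with the direct translation between ``$\pi$ and $g$ agree on two distinct points'' and ``some non-diagonal orbital meets its image under $\pi$''. Your write-up just makes the bookkeeping (reducing $a(g)=1$ to $a(g)\le 1$) more explicit than the paper does.
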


\begin{proof}
If $(x,y),(x\pi,y\pi)\in O$, then there exists $g\in G$ with
$(x\pi,y\pi)=(xg,yg)$, and $d(\pi,g)\le n-2$. Conversely, if not all
distances $d(g,\pi)$ are $n-1$, then (since their average is $n-1$) there
exists $g\in G$ with $d(\pi,g)\le n-2$, so $\pi$ and $g$ agree on two
distinct points $x$ and $y$, and $(x\pi,y\pi)=(xg,yg)$.
\end{proof}

\begin{corollary}
Let $G$ be a normal subgroup of a $2$-transitive group of degree $n$. Then
$r(G)=n-1$ unless $G$ is itself $2$-transitive, in which case $r(G)=n$.
\end{corollary}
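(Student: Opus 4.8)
The plan is to handle the dichotomy directly. If $G$ is itself $2$-transitive, then $r(G)=n$ is immediate from the first sentence of Corollary~\ref{cor:2-transitive}. So the content lies in the other case: $G$ is a (nontrivial) normal subgroup of a $2$-transitive group $H$ of degree $n$, but $G$ is not $2$-transitive, and I must show $r(G)=n-1$. First I would reduce to the fact that $G$ is transitive: since $H$ is $2$-transitive it is primitive, and a nontrivial normal subgroup of a primitive group is transitive. (The case $G=\{(1)\}$ is degenerate and must be excluded, since there $r(G)=0$.) By Proposition~\ref{prop:transitive} transitivity already gives $r(G)\ge n-1$, so it suffices to exhibit one $\pi\in S_n$ with $d(g,\pi)=n-1$ for all $g\in G$.

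The key device is to let $H$ act on the set $\mathcal{O}$ of non-diagonal orbitals of $G$. Writing $Oh=\{(xh,yh):(x,y)\in O\}$, normality makes this an action on $G$-orbitals: if $(x',y')=(xg,yg)$ with $g\in G$, then $(x'h,y'h)=\bigl(xh\,(h^{-1}gh),\,yh\,(h^{-1}gh)\bigr)$ and $h^{-1}gh\in G$, so $Oh$ is again a single $G$-orbital. Since $H$ is transitive on ordered pairs of distinct points and permutes the $G$-orbitals, it acts transitively on $\mathcal{O}$: given non-diagonal orbitals $O_1,O_2$ with representatives $(x_1,y_1)\in O_1$ and $(x_2,y_2)\in O_2$, any $h\in H$ with $(x_1h,y_1h)=(x_2,y_2)$ satisfies $O_1h=O_2$. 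Finally, because $G$ is transitive but not $2$-transitive it has more than one non-diagonal orbital, so $|\mathcal{O}|\ge 2$.

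It then remains to find $\pi\in H$ acting on $\mathcal{O}$ without fixed points. Here I would invoke the classical theorem of Jordan that every finite transitive permutation group of degree at least $2$ contains a fixed-point-free element, applied to the transitive action of $H$ on $\mathcal{O}$: this yields $\pi\in H$ with $O\pi\ne O$ for every non-diagonal orbital $O$. Since distinct orbitals are disjoint, $O\pi\ne O$ is precisely the statement that $(x,y)\in O$ implies $(x\pi,y\pi)\notin O$, so Proposition~\ref{prop:orbitals} gives $d(g,\pi)=n-1$ for all $g\in G$ and hence $r(G)=n-1$. The main obstacle is setting up this reduction correctly: one must check that normality makes the $H$-action on $G$-orbitals well defined and transitive, and recognise that the orbital condition of Proposition~\ref{prop:orbitals} is exactly the assertion of no fixed non-diagonal orbital, so that Jordan's derangement theorem supplies the required $\pi$.
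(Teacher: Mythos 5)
Your proposal is correct and follows essentially the same route as the paper: let $H$ act on the set of non-diagonal $G$-orbitals (well defined and transitive by normality and $2$-transitivity of $H$), apply Jordan's derangement theorem to obtain an element fixing no orbital, and conclude via Proposition~\ref{prop:orbitals}. The paper phrases the Jordan step in terms of $H/G$ and a coset of $G$, while you apply it to the $H$-action directly, and you additionally spell out the reduction to $G$ transitive via primitivity of $H$, which the paper leaves implicit; these are cosmetic differences only.
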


\begin{proof}
If $G$ is $2$-transitive, there is only one non-diagonal orbital, and the
result follows.

Suppose that $G$ is a normal subgroup of the $2$-transitive group $H$, and
that it has $r$ non-diagonal orbitals, where $r>1$. Then $H/G$ permutes
these orbitals transitively. By Jordan's Theorem, $H/G$ contains an element
fixing no orbital. If $\pi$ is a permutation in this coset of $G$, then $\pi$
has distance $n-1$ from every element of $G$.
\end{proof}

\begin{corollary}
If $G$ is transitive of degree $n$, and a point stabiliser has an orbit
of size greater than $(n-1)/2$, then $G$ has remoteness $n$.
\end{corollary}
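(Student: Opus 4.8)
The plan is to reduce the claim to an edge-counting argument on the orbital digraph attached to the large suborbit, using the two propositions already established for transitive groups. By Proposition~\ref{prop:transitive} we always have $r(G)\ge n-1$, and $r(G)=n-1$ precisely when some $\pi\in S_n$ satisfies $d(g,\pi)=n-1$ for every $g\in G$; in the contrary case $r(G)=n$. So it suffices to prove that no such $\pi$ exists.

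First I would invoke Proposition~\ref{prop:orbitals} to restate the existence of a good $\pi$ as a purely combinatorial condition: $\pi$ works if and only if, for every non-diagonal orbital $O$, we have $(x,y)\in O\Rightarrow(x\pi,y\pi)\notin O$. Under the bijection between non-diagonal orbitals and the non-trivial orbits of a point stabiliser, the hypothesised suborbit of size $k>(n-1)/2$ corresponds to a non-diagonal orbital $O$. (For $n\ge 3$ an orbit of size exceeding $(n-1)/2$ is automatically non-trivial, since the fixed point contributes an orbit of size $1\le (n-1)/2$; the remaining tiny degrees are checked by hand, noting that for $n=2$ the unique non-trivial suborbit already has size $1>(n-1)/2$.) It then suffices to show that this single orbital $O$ cannot be destroyed by any $\pi$, since that already rules out a simultaneously good $\pi$ and forces $r(G)=n$.

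The core step is a double count on $O$. Viewing $O$ as the edge set of the orbital digraph on $\Omega=\{1,\ldots,n\}$, vertex-transitivity makes it out-regular of out-degree equal to the suborbit size $k$, so $|O|=nk$. For any $\pi\in S_n$ the map $(x,y)\mapsto(x\pi,y\pi)$ is a bijection of ordered pairs, so the image set $O\pi=\{(x\pi,y\pi):(x,y)\in O\}$ again has cardinality $nk$. The condition that $\pi$ destroys $O$ is exactly $O\pi\cap O=\emptyset$. But $O$ and $O\pi$ are both contained in the set of $n(n-1)$ ordered non-diagonal pairs, so disjointness would force $2nk\le n(n-1)$, that is $k\le (n-1)/2$, contradicting $k>(n-1)/2$.

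Hence for every $\pi$ there is a pair $(x,y)\in O$ with $(x\pi,y\pi)\in O$, so the criterion of Proposition~\ref{prop:orbitals} fails for $O$, and a fortiori no $\pi$ can meet it for all orbitals simultaneously. Thus no permutation is at distance $n-1$ from all of $G$, and $r(G)=n$. I do not anticipate a genuine obstacle: the whole argument is the observation that an out-regular digraph carrying more than half the maximal number of non-diagonal arcs must meet every $\pi$-translate of itself. The only mild care required is the bookkeeping that identifies the large suborbit with a non-diagonal orbital, which is where the small-degree cases are dispatched.
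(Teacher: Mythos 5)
Your proposal is correct and is essentially the paper's own proof: the paper likewise observes that the orbital $O$ corresponding to the large suborbit has $|O|>n(n-1)/2$, so $O\cap O\pi\ne\emptyset$ for every $\pi$, and then applies Proposition~\ref{prop:orbitals}. Your additional bookkeeping about non-triviality of the suborbit and small degrees is harmless but not needed beyond $n\ge 3$.
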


\begin{proof}
There is an orbital $O$ with $|O|>n(n-1)/2$; so $O\cap O\pi \ne\emptyset$
for all permutations $\pi$.
\end{proof}

Part of the following Corollary is explained by Theorem~\ref{th:odd} below.

\begin{corollary}
A permutation group of rank~$3$ has remoteness $n-1$ if and only if either
it has odd order (in which case it is a group of automorphisms of a Paley
tournament) or the graphs formed by the two non-diagonal orbitals are
isomorphic.
\end{corollary}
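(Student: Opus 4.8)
The plan is to reduce the whole statement to the existence of the special permutation supplied by Proposition~\ref{prop:orbitals}. Since $G$ has rank~$3$ it has exactly two non-diagonal orbitals $O_1,O_2$, and $O_1\cup O_2$ is the set of all off-diagonal ordered pairs. Combining Proposition~\ref{prop:transitive} with Proposition~\ref{prop:orbitals}, $r(G)=n-1$ if and only if there is a permutation $\pi$ with $(x,y)\in O_i\Rightarrow(x\pi,y\pi)\notin O_i$ for $i=1,2$; otherwise $r(G)=n$. So everything comes down to deciding when such a $\pi$ exists, and I would first split according to the pairing of the orbitals.

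The pairing map $O\mapsto O^{*}$ fixes the diagonal and permutes $\{O_1,O_2\}$, so either both orbitals are self-paired or $O_2=O_1^{*}$. A non-diagonal orbital is self-paired exactly when some $g\in G$ swaps a pair $x,y$, and any such $g$ has even order; conversely, if $O_2=O_1^{*}$ then $T=(\Omega,O_1)$ is a tournament and $G\le\mathrm{Aut}(T)$, which has odd order since a tournament admits no arc-reversing automorphism. Hence for a rank-$3$ group, $|G|$ is odd iff $O_2=O_1^{*}$ (the tournament case), and $|G|$ is even iff both orbitals are self-paired (the graph case); this matches the two alternatives in the statement.

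In the even (graph) case the orbitals give complementary undirected graphs $\Gamma_1,\Gamma_2=\overline{\Gamma_1}$. The condition on $\pi$ then says that $\pi$ carries every edge of $\Gamma_1$ to an edge of $\Gamma_2$ and every edge of $\Gamma_2$ to an edge of $\Gamma_1$; counting edges forces both injections to be bijections, so $\pi(\Gamma_1)=\Gamma_2$, i.e.\ $\pi$ is an isomorphism $\Gamma_1\to\Gamma_2$. Conversely, any such isomorphism satisfies both conditions, since $\pi(\Gamma_2)=\pi(\overline{\Gamma_1})=\overline{\Gamma_2}=\Gamma_1$. Thus in the even case $r(G)=n-1$ iff $\Gamma_1\cong\Gamma_2$, which is precisely the second alternative; this part is a routine application of Proposition~\ref{prop:orbitals}.

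In the odd (tournament) case the two conditions coincide and say that $\pi$ reverses every arc of $T$, i.e.\ $\pi$ is an anti-automorphism of $T$. As $G$ is transitive on the orbital $O_1$, the tournament $T$ is arc-transitive, and I would invoke the classification of arc-transitive tournaments (the substance of Theorem~\ref{th:odd} below) to conclude that $T$ is a Paley tournament; in particular $n\equiv 3\pmod 4$ is a prime power and $G\le\mathrm{Aut}(T)$. A Paley tournament is self-converse, since multiplication by a fixed non-residue reverses all arcs, so the required $\pi$ exists and $r(G)=n-1$ holds automatically. The main obstacle is exactly this last identification: showing that an arc-transitive rank-$3$ tournament must be a Paley tournament is the one genuinely deep step, while the pairing dichotomy and the graph case are bookkeeping with Proposition~\ref{prop:orbitals}.
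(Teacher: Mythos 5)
Your reduction to Proposition~\ref{prop:orbitals}, the pairing dichotomy (odd order $\Leftrightarrow$ the two non-diagonal orbitals are paired with each other $\Leftrightarrow$ tournament case; even order $\Leftrightarrow$ both self-paired $\Leftrightarrow$ complementary graphs), and the even/graph case are all correct. Your edge-counting argument even subsumes the paper's separate appeal to the preceding corollary for orbitals of unequal size; up to that point you are essentially giving a more detailed version of the paper's terser argument, which observes that when the orbitals have equal size a permutation $\pi$ satisfies the condition of Proposition~\ref{prop:orbitals} if and only if it interchanges the two orbitals, i.e.\ is an isomorphism from each orbital graph to its complement.

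The problem is the odd case. You invoke ``the classification of arc-transitive tournaments'' and describe it as ``the substance of Theorem~\ref{th:odd}''. It is not: Theorem~\ref{th:odd} states only that a transitive group of odd order has remoteness $n-1$, and its proof shows that such a group preserves \emph{some} self-converse tournament (via Feit--Thompson and induction on the degree); nowhere does the paper prove, or need, that an arc-transitive tournament must be a Paley tournament. So, as written, your odd case rests on an external classification theorem that the paper does not supply. The repair is easy and makes the ``genuinely deep step'' you flag unnecessary: either quote Theorem~\ref{th:odd} directly ($G$ is transitive of odd order, hence $r(G)=n-1$), or note that the self-converse $G$-invariant tournament produced in its proof must, for a rank-$3$ group, be the orbital tournament $T$ itself (the only $G$-invariant tournaments are $O_1$ and $O_1^{*}$), which hands you the required anti-automorphism $\pi$ without any mention of Paley. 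The parenthetical identification of $T$ as a Paley tournament is a known fact that the paper also asserts without proof; your argument does not establish it from material in the paper, so it should not be presented as the load-bearing step of the equivalence.
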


\begin{proof}
If the non-diagonal orbitals have different sizes, the preceding corollary
applies. If they have the same size, then $\pi$ satisfies the condition
of Proposition~\ref{prop:orbitals} if and only if it interchanges the two orbitals, i.e.
it is an isomorphism between each orbital graph and its complement.
\end{proof}

\begin{corollary} \label{cor:Paley}
If $G$ is transitive and there is a self-complementary $G$-invariant
graph,  or a self-converse $G$-invariant tournament, then $G$ has
remoteness $n-1$.
\end{corollary}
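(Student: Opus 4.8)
The plan is to reduce everything to the criterion of Proposition~\ref{prop:orbitals}. By Proposition~\ref{prop:transitive}, a transitive group $G$ has $r(G)=n-1$ precisely when some permutation $\pi$ satisfies $d(g,\pi)=n-1$ for all $g\in G$, and Proposition~\ref{prop:orbitals} rephrases this as the existence of a $\pi$ such that $(x,y)\in O$ implies $(x\pi,y\pi)\notin O$ for every non-diagonal orbital $O$. So it suffices to exhibit one such $\pi$ under each of the two hypotheses, and in both cases the natural candidate is the relevant complementing or converting isomorphism.

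First I would handle the tournament case. Let $T$ be a $G$-invariant tournament with arc set $A$, and let $\pi$ be an isomorphism from $T$ to its converse $T^{*}$, so that $(x,y)\in A$ if and only if $(x\pi,y\pi)\in A^{*}$, equivalently $(x\pi,y\pi)\notin A$. Since $T$ is $G$-invariant, $A$ is a union of orbitals, and because $A\cap A^{*}=\emptyset$ every non-diagonal orbital lies entirely in $A$ or entirely in $A^{*}$. For an orbital $O\subseteq A$ and $(x,y)\in O$ we get $(x\pi,y\pi)\notin A\supseteq O$; for an orbital $O\subseteq A^{*}$ and $(x,y)\in O$ we have $(x,y)\notin A$, so $(x\pi,y\pi)\in A$ and hence $(x\pi,y\pi)\notin A^{*}\supseteq O$. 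In both cases $(x\pi,y\pi)\notin O$, which is exactly the condition of Proposition~\ref{prop:orbitals}.

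The self-complementary graph case is entirely parallel. I would take a $G$-invariant self-complementary graph $\Gamma$ with (symmetric) edge set $E$ and let $\pi$ be an isomorphism from $\Gamma$ to its complement $\bar\Gamma$, so that $(x,y)\in E$ if and only if $(x\pi,y\pi)\notin E$. Again $E$ is a union of orbitals, so each non-diagonal orbital is contained in $E$ or in its complement $\bar E$, and the same two-line verification shows $(x,y)\in O$ implies $(x\pi,y\pi)\notin O$ for every non-diagonal orbital $O$. Applying Proposition~\ref{prop:orbitals} then gives $d(g,\pi)=n-1$ for all $g\in G$, whence $r(G)=n-1$.

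The one point that needs care—and the step I would flag as the crux—is that the orbital condition must be checked not only for orbitals sitting inside the edge (or arc) set, but also for those in its complement; this is precisely where the ``if and only if'' built into the notion of isomorphism is used, so that $\pi$ both destroys edges and creates them. Everything else is bookkeeping: the $G$-invariance of $\Gamma$ or $T$ guarantees that the edge or arc set is a union of orbitals, which is what lets each orbital be treated uniformly as lying wholly inside or wholly outside that set.
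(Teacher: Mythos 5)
Your proof is correct and follows the same route as the paper: the paper's own (one-line) argument is exactly that the converting/complementing isomorphism $\pi$ interchanges the orbitals inside the arc (or edge) set with those outside it, so Proposition~\ref{prop:orbitals} applies. Your version simply spells out the two-case check that the paper leaves implicit.
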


\begin{proof}
The orbitals which are edges or arcs of the given graph or tournament are
interchanged with those which are not by $\pi$.
\end{proof}

A powerful consequence is given below.

\begin{theorem} \label{th:odd}
A transitive permutation group of degree $n$ with odd order has remoteness
$n-1$.
\end{theorem}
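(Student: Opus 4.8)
The plan is to run everything through the orbital characterisation. By Proposition~\ref{prop:transitive} it suffices to produce a single permutation $\pi\in S_n$ with $d(\pi,g)=n-1$ for every $g\in G$, and by Proposition~\ref{prop:orbitals} this is equivalent to finding $\pi$ such that $(x,y)\in O$ implies $(x\pi,y\pi)\notin O$ for every non-diagonal orbital $O$. So the whole theorem reduces to constructing one such \emph{orbital-avoiding} permutation.

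The only place the odd order enters essentially is the observation that no non-diagonal orbital is self-paired. Indeed, if $O=O^{*}$ then some $g\in G$ sends a pair $(x,y)\in O$ to $(y,x)$, so $g$ interchanges $x$ and $y$; this $2$-cycle forces $g$ to have even order, contradicting $|G|$ odd. Hence the non-diagonal orbitals fall into pairs $\{O,O^{*}\}$ with $O\ne O^{*}$, which is exactly the hypothesis behind the self-converse constructions preceding the theorem. One convenient way to obtain an avoiding permutation is thus to find an orbital-\emph{reversing} $\pi$, sending each $O$ into $O^{*}$; since $O\cap O^{*}=\emptyset$ this is automatically avoiding, and it is exactly the situation of Corollary~\ref{cor:Paley}. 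I would first dispose of two transparent model cases. If $G$ is regular I identify $\Omega$ with $G$, so that the orbital of $(x,y)$ is governed by $xy^{-1}$; then inversion $\pi\colon x\mapsto x^{-1}$ is orbital-avoiding once one checks, using that squaring is injective on a group of odd order, that $x^{-1}y=xy^{-1}$ forces $x^{2}=y^{2}$ and hence $x=y$. If $G$ is primitive then, $G$ being solvable by the Feit--Thompson theorem, it is of affine type $G=V\rtimes G_{0}$ with $V=\mathbb{F}_{p}^{d}$ ($p$ odd) regular and normal; the orbital of $(u,w)$ is the $G_{0}$-orbit of $w-u$, its converse that of $-(w-u)$, and the negation $\pi\colon v\mapsto -v$ reverses every orbital precisely because none is self-paired. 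In both cases $r(G)=n-1$.

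For the general imprimitive case the plan is to induct on the degree using a minimal block system. A minimal normal subgroup $N$ (elementary abelian, by solvability) partitions $\Omega$ into blocks permuted transitively by the induced action, which is again transitive of odd order and of smaller degree, while the setwise stabiliser of a block acts transitively of odd order on that block. By induction both carry orbital-avoiding permutations, and I would assemble $\pi$ from a block-level avoiding permutation together with avoiding permutations inside the blocks.

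I expect the assembly step to be the main obstacle. The orbital of a pair of points lying in two distinct blocks records more than the pair of blocks it joins, so it is not enough to reverse the block-level orbitals: one must verify that a reversal chosen on the set of blocks can be lifted compatibly with the within-block reversals so that the cross-block orbitals \emph{and} the within-block orbitals are simultaneously sent off themselves. A secondary nuisance is that $\Omega\cong H\backslash G$ carries no canonical involution when the point stabiliser $H$ is not normal, which is why the reversing permutation has to be transported explicitly through the induction rather than written down by the uniform ``negation'' formula that works in the affine case.
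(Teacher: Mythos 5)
Your reduction is the right one, and the two ``model cases'' are handled correctly: the observation that odd order forces every non-diagonal orbital $O$ to satisfy $O\neq O^{*}$ is exactly the engine of the paper's argument, your inversion map for regular $G$ is a valid orbital-avoiding permutation (the check via injectivity of squaring is sound), and your treatment of the primitive case via Feit--Thompson, the affine structure, and negation is essentially identical to the paper's (which phrases it as a Cayley tournament reversed by $-I$). The problem is that the imprimitive case --- which is the bulk of the theorem --- is left as an acknowledged obstacle rather than proved, and the difficulty you name is real: for a general imprimitive $G$, the $G$-orbitals on cross-block pairs genuinely refine the orbitals of the block quotient, so gluing a block-level avoiding permutation to within-block avoiding permutations need not avoid the cross-block orbitals. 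Your induction on a minimal block system does not close as stated.

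The missing idea is a maximality reduction. Since any subgroup of a group preserving a self-converse $G$-invariant tournament also preserves it, it suffices (via Corollary~\ref{cor:Paley}) to prove the statement for subgroups of $S_n$ that are \emph{maximal} subject to having odd order. For such a group, imprimitivity forces $G$ to be the full wreath product $H\wr K$ of odd-order transitive groups of smaller degree, and then the cross-block orbitals are determined by the block-level orbitals alone --- precisely the property that fails in general and that blocks your assembly. One then takes self-converse tournaments $S$ and $T$ for $H$ and $K$ (by induction), forms their lexicographic product, and exhibits an anti-automorphism by putting a copy of the reversing isomorphism $\sigma$ of $S$ on each block and composing with the blown-up reversing isomorphism $\tau$ of $T$. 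Working with the invariant tournament rather than transporting the avoiding permutation itself is what makes this composition go through cleanly; without the maximality step your induction has a genuine gap.
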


\begin{proof}
Let $G$ be a transitive permutation group of degree $n$. By our earlier
result, it suffices to show that $G$ is contained in the automorphism group
of a self-converse tournament. We prove this by induction on $n$, so assume
that this statement holds for permutation groups of smaller degree. Note that
it suffices to prove the result in the case where $G$ is a subgroup
of $S_n$ maximal subject to having odd order.

\textbf{Case 1:} $G$ is imprimitive. By maximality, $G$ is the wreath
product of $H$ and $K$, where $H$ and $K$ are transitive groups of smaller
degree having odd order. By the inductive hypothesis, each is contained in
the automorphism group of a self-converse tournament; call these tournaments
$S$ and $T$. Then form the lexicographic product of $S$ and $T$: that is,
take $|T|$ copies of $S$ indexed by vertices of $T$, and orient edges between
two copies of $S$ according to the arc between the corresponding vertices of
$T$. Clearly $G$ is a group of automorphisms of the resulting tournament. We
have to show that it is self-converse. Choose an isomorphism $\sigma$ from
$S$ to its complement, and put a copy of $\sigma$ on each copy of $S$. Now
compose with an isomorphism $\tau$ from $T$ to its complement, blown up to
act on copies of $S$ as it does on vertices of $T$. (This blow-up procedure
means that the blown-up $\tau$ induces an isomorphism between any two copies
of $S$; combined with $\sigma$ this makes it an anti-isomorphism.)

\textbf{Case 2:} $G$ is primitive. By the Feit--Thompson theorem, $G$ is
soluble; so it has a normal elementary abelian subgroup (isomorphic to the
additive group of a vector space $V$ over a prime field) which acts regularly
on the points. So we can identify the point set with $V$. Since $G$ has odd
order, no ordered pair is reversed by $G$. So the orbits of $G$ on ordered
pairs of vertices come in self-converse pairs, and we may pick one out of
each pair to form a tournament. Since $V$ acts regularly, this tournament is
a Cayley tournament for the group $V$: that is, there is a set $S$ such that
we have arcs from $0$ to $v$ for $v\in S$, and arcs $v$ to $0$ for $v\notin S$
(and note that, if $v\in S$, then $-v\notin S$ and conversely); all other
arcs are obtained by translation. Now the linear map represented by $-I$
reverses the orientation of all edges, so induces an anti-automorphism.
\end{proof}

\paragraph{Remark} The Paley graphs are isomorphic to their complements,
so their automorphism groups have remoteness $n-1$.

The symmetric and alternating groups $S_7$ and $A_7$
acting on the set of $2$-element subsets of $\{1,\ldots,7\}$ have
rank~$3$, with two orbitals of the same size, but the two invariant graphs
(the line graph of $K_7$ and its complement) are not isomorphic. So these
groups have remoteness $n$.

\paragraph{Remark} Our results resolve the question of remoteness for many,
but not all, transitive permutation groups. So the complexity question remains
open: given permutations which generate a transitive group $G$, decide whether
$r(G)=n$.

\subsection{The remoteness graph}

Let $G$ be a group acting transitively on a set $\Omega$ with cardinality $n$.

\begin{definition}
The remoteness graph $R(G)$ of $G$ has vertex set $\Omega^2$ and two distinct ordered pairs of points $(a,b), (c,d) \in \Omega^2$ are adjacent in $R(G)$ if and only if $a =c$ or $b = d$ or there exists $g \in G$ such that $(b,d) = (ag,cg)$ (and hence $(b,d)$ and $(a,c)$ lie in the same orbital). 
\end{definition}

We easily obtain that $R(G)$ is complete if and only if $G$ is $2$-transitive. We remind the reader of the following notations from graph theory \cite{BM08}. Let $X$ be a graph, then its stability number (also known as independence number), chromatic number, and clique number are denoted as $\alpha(X)$, $\chi(X)$, and $\omega(X)$, respectively.

\begin{proposition}
For any transitive group $G$, $\alpha(R(G)) \le n$ with equality if and only if $r(G) = n-1$.
\end{proposition}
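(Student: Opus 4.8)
\section*{Proof proposal}

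The plan is to identify the size-$n$ independent sets of $R(G)$ with exactly those permutations $\pi$ realizing the optimal distance $n-1$ from every element of $G$, and then to invoke Proposition~\ref{prop:orbitals} together with Proposition~\ref{prop:transitive}. The argument is essentially a chain of reformulations rather than a computation.

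First I would establish the bound $\alpha(R(G)) \le n$. By the definition of $R(G)$, any two distinct vertices $(a,b)$ and $(a,d)$ sharing the same first coordinate are adjacent. Hence the first coordinates of the vertices in any independent set are pairwise distinct, and since there are only $n$ possible first coordinates in $\Omega$, every independent set has at most $n$ vertices.

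To characterize equality, I would analyse the structure of an independent set $S$ of size exactly $n$. By the observation above, the $n$ vertices of $S$ must use each of the $n$ possible first coordinates exactly once, so $S = \{(x, xf) : x \in \Omega\}$ for some function $f \colon \Omega \to \Omega$. Symmetrically, two distinct vertices sharing a second coordinate are adjacent, so the second coordinates are also pairwise distinct; as there are $n$ of them, $f$ is a bijection. Thus every size-$n$ independent set is precisely the set of ordered pairs $\{(x, x\pi) : x \in \Omega\}$ associated to some permutation $\pi \in S_n$, and conversely each permutation yields such a candidate set.

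The crux is then to translate independence of $\{(x, x\pi)\}$ into the distance condition. For distinct $a, c \in \Omega$, the vertices $(a, a\pi)$ and $(c, c\pi)$ never share a first or second coordinate, since $\pi$ is a bijection; so by the definition of $R(G)$ they are adjacent if and only if there is $g \in G$ with $(a\pi, c\pi) = (ag, cg)$, i.e.\ if and only if $(a,c)$ and $(a\pi, c\pi)$ lie in the same orbital. Hence $S$ is independent precisely when no such coincidence occurs for any $a \ne c$, which is exactly the orbital condition of Proposition~\ref{prop:orbitals}, equivalent to $d(g,\pi) = n-1$ for all $g \in G$. Combining this with Proposition~\ref{prop:transitive}, which asserts that such a $\pi$ exists if and only if $r(G) = n-1$, yields $\alpha(R(G)) = n$ if and only if $r(G) = n-1$. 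I anticipate no serious obstacle; the one point requiring care is verifying that the first- and second-coordinate adjacency rules force a size-$n$ independent set to be exactly the pair-set of a permutation, after which the third adjacency rule matches the hypothesis of Proposition~\ref{prop:orbitals} verbatim.
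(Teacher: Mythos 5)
Your proposal is correct and follows essentially the same route as the paper: bound $\alpha(R(G))\le n$ via the coordinate-adjacency rules (the paper phrases this as $R(G)$ containing the Hamming graph $H(2,n)$ as a spanning subgraph), identify a size-$n$ independent set with the graph $\{(x,x\pi)\}$ of a permutation, and match the remaining adjacency rule to the orbital condition of Proposition~\ref{prop:orbitals}. Your explicit appeal to Proposition~\ref{prop:transitive} to convert the existence of such a $\pi$ into $r(G)=n-1$ is a detail the paper leaves implicit, but it is the intended step.
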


\begin{proof}
Clearly, $R(G)$ contains the Hamming graph $H(2,n)$ as a spanning subgraph, hence $\alpha \le n$. We have $\alpha = n$ if and only if there are $n$ ordered pairs $(a_i,b_i)$ such that all $a_i$ and all $b_i$ are distinct and for any $i \ne j$, $(a_i,a_j)$ is not in the same orbital as $(b_i,b_j)$. Denoting $b_i = a_i\pi$ for all $i$, we see that $\pi$ is a permutation which satisfies the conditions of Proposition~\ref{prop:orbitals}.
\end{proof}

\begin{lemma}
The graph $R(G)$ is vertex-transitive.
\end{lemma}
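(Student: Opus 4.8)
The plan is to produce a group of automorphisms of $R(G)$ that already acts transitively on the vertex set $\Omega^2$, so that vertex-transitivity follows at once. The symmetries I would use are obtained by letting $G$ act independently on the two coordinates: for each pair $(g,h)\in G\times G$ define $\phi_{g,h}\colon\Omega^2\to\Omega^2$ by $(a,b)\phi_{g,h}=(ag,bh)$. Each $\phi_{g,h}$ is a bijection of $\Omega^2$, being the product of two permutations of $\Omega$, and these maps compose according to $\phi_{g_1,h_1}\phi_{g_2,h_2}=\phi_{g_1g_2,h_1h_2}$, so they form a group. The two tasks are then to check that every $\phi_{g,h}$ is an automorphism of $R(G)$, and that the resulting group is transitive on $\Omega^2$.

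For the automorphism property I would fix two distinct vertices $(a,b)$ and $(c,d)$ and compare the adjacency of this pair with that of its image $\{(ag,bh),(cg,dh)\}$, checking the three defining conditions in turn. The first two are immediate because $g$ and $h$ are permutations: $ag=cg$ iff $a=c$, and $bh=dh$ iff $b=d$. The real content is the orbital condition, and here I would invoke the fact that the orbitals are exactly the orbits of the diagonal action of $G$ on $\Omega^2$. The pair of first coordinates of the image, $(ag,cg)$, is the diagonal $g$-image of $(a,c)$ and so lies in the same orbital as $(a,c)$; similarly $(bh,dh)$ lies in the same orbital as $(b,d)$. Hence $(ag,cg)$ and $(bh,dh)$ share an orbital if and only if $(a,c)$ and $(b,d)$ do, which is exactly the statement that the orbital condition is preserved. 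This shows $\phi_{g,h}\in\mathrm{Aut}(R(G))$.

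For transitivity I would use that $G$ is transitive on $\Omega$: given vertices $(a,b)$ and $(c,d)$, choose $g\in G$ with $ag=c$ and $h\in G$ with $bh=d$; then $\phi_{g,h}$ sends $(a,b)$ to $(c,d)$. Thus the group $\{\phi_{g,h}:g,h\in G\}$ of automorphisms is transitive on $\Omega^2$, and $R(G)$ is vertex-transitive.

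The only delicate point is the orbital condition in the second step. One must use that membership in a common orbital is an equivalence relation (orbitals partition $\Omega^2$), so that it is enough to track which orbital each coordinate-pair falls into, together with the observation that altering a single coordinate of a vertex by a group element corresponds precisely to the diagonal action on the associated orbital pair. Once this is set up the verification is routine; notably, no use of the coordinate-swap symmetry $(a,b)\mapsto(b,a)$ or of any other, less obvious, automorphism is required.
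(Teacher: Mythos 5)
Your proposal is correct and follows essentially the same route as the paper: both exhibit $G\times G$ acting coordinatewise as a group of automorphisms of $R(G)$, with the key point being that the diagonal action preserves orbitals, and then use transitivity of $G$ in each coordinate to move any vertex to any other. Your phrasing of the orbital step (tracking the orbital of the pair of first coordinates and of the pair of second coordinates separately) is a slightly cleaner way of writing the same computation the paper does with the element $g_2=h^{-1}g_1g$.
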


\begin{proof}
We prove that $G \times G$ acting coordinatewise is in the automorphism group of $R(G)$. Let $(a,b),(x,y) \in \Omega^2$; we have $x = ag, y=bh$ for some $g,h \in G$. Therefore, consider two vertices $(c,d), (e,f) \in \Omega^2$; they are adjacent if and only if either $c=e$, $d=f$, or $(c,e) = (dg_1,fg_1)$ for some $g_1 \in G$. This is equivalent to either $cg = eg$, $dh=fh$, or $(cg,eg) = (dhg_2,fhg_2)$ where $g_2 = h^{-1}g_1g$; in other words, $(cg,dh)$ is adjacent to $(eg,fh)$.
\end{proof}


We hence have the following inequalities \cite[Corollary 7.5.2]{GR01}:
\begin{equation} \nonumber
    n \le \omega(R(G)) \le \frac{n^2}{\alpha(R(G))} \le \chi(R(G)),
\end{equation}
and we want to know when equality holds for the first two. Note that if $G$ has a subset of $n$ permutations with minimum distance $n$ (e.g., a regular subgroup), then we require that not only $\alpha(R(G)) = n$, but also that the whole vertex set be partitioned into $n$ stable sets. In other words, if $G$ has a regular subgroup, then it has remoteness $n-1$ if and only if $\chi(\overline{R(G)}) = \alpha(R(G)) = \omega(R(G)) = n$.

Since $R(G)$ is vertex-transitive, it is regular, and its valency can be easily computed. We have $(a,a) \sim (b,c)$ if and only if either $b=a$, $c=a$, or $(a,b)$ and $(a,c)$ are in the same non-diagonal orbital. For any non-diagonal orbital $O$, we have $\left(\frac{|O|}{n}\right)^2$ neighbours of $(a,a)$ from $O$. Therefore, the valency is given by
\begin{equation} \nonumber
    2(n-1) + \frac{1}{n^2} \sum_{O\, \mbox{non-diagonal}} |O|^2.
\end{equation}

We can define an analogous graph for any set of permutations. If the set is a Latin square (in particular if it is a regular permutation group), then the graph is the strongly regular {\em Latin square graph} \cite{BC03} with parameters $(n^2,3(n-1),n,6)$; its clique number is $n$ (if $n > 2$), its stability number is $n$ if and only if the Latin square has a transversal, and its chromatic number is $n$ if and only if the Latin square has an orthogonal mate.

\subsection{List of transitive groups with remoteness $n-1$}

The table gives all transitive groups of degree $n<10$ which have
remoteness $n-1$. The first column gives the degree; the second column
the number in the \textsf{GAP} listing (so that the \textsf{GAP} command
\texttt{TransitiveGroup(9,17)} produces the last group in the list, for
example); the third column the order of the group; and the fourth column
refers to a note
giving a result in our paper justifying the conclusion where possible.
There are no transitive groups of degree~$6$ with remoteness~$5$; for
the last three groups of degree~$8$, the result is shown by computation.

\[\begin{array}{|r|r|r|r|}
\hline
n & k & |G| & \hbox{Note} \\\hline
3 & 1 & 3 & 1 \\\hline
4 & 2 & 4 & 2 \\\hline
5 & 1 & 5 & 1 \\
  & 2 & 10 & 3 \\\hline
7 & 1 & 7 & 1 \\
  & 2 & 14 & 3 \\
  & 3 & 21 & 4 \\\hline
8 & 2 & 8 & 2 \\
  & 3 & 8 & 2 \\
  & 4 & 8 & 2 \\
  & 5 & 8 & 2 \\
  & 9 & 16 & \\
  & 10 & 16 & \\
  & 11 & 16 & \\\hline
9 & 1 & 9 & 1 \\
  & 2 & 9 & 4 \\
  & 4 & 18 & 5 \\
  & 5 & 18 & 5 \\
  & 6 & 27 & 4 \\
  & 7 & 27 & 4 \\
  & 8 & 36 & 5 \\
  & 9 & 36 & 5 \\
  & 16 & 72 & 5 \\
  & 17 & 81 & 4 \\\hline
\end{array}\]

\paragraph{Notes}
\begin{itemize}
\item[1] Cyclic group; Theorem \ref{th:one_generator}
\item[2] Regular group, non-cyclic Sylow $2$-subgroup; Corollary \ref{cor:2-transitive}
\item[3] Dihedral group; Proposition \ref{prop:dihedral}
\item[4] Group of odd order; Theorem \ref{th:odd}
\item[5] Automorphism group of Paley graph; Corollary \ref{cor:Paley}
\end{itemize}

\bibliographystyle{model1b-num-names}
\bibliography{r}

\end{document}